\definecolor{halfgray}{gray}{0.55} 
\definecolor{webgreen}{rgb}{0,0.5,0}
\definecolor{webbrown}{rgb}{.6,0,0} \hypersetup{%
\newcommand{\abs}[1]{\left\lvert{#1}\right\rvert}
\newcommand{\norm}[1]{\left\|{#1}\right\|}
\newcommand{\R}{\mathbb{R}}\newcommand{\N}{\mathbb{N}}
\newcommand{\Z}{\mathbb{Z}}
 \newcommand{\ie}{i.e.\ }
\newtheorem*{teo*}{Theorem}
\newtheorem{theorem}{Theorem}[section]
\newtheorem{proposition}[theorem]{Proposition}
\newtheorem{lemma}[theorem]{Lemma}
\theoremstyle{definition} \newtheorem{definition}[theorem]{Definition}
\theoremstyle{remark} \newtheorem{remark}[theorem]{Remark}
\newcommand{\Diff}[1]{\mathrm{Diff}^{#1}}
\newcommand{\eps}{\varepsilon} \newcommand{\carr}{\righttoleftarrow}
 \DeclareMathOperator{\Lip}{Lip}
 \DeclareMathOperator{\Fix}{Fix}
 \DeclareMathOperator{\GL}{GL}
\title[Cohomology of diffeomoprhism-valued cocycles]{Cohomology of
  dominated diffeomorphism-valued cocycles over hyperbolic systems}
\author[L. Backes]{Lucas H. Backes} \address{IMPA, Estrada Dona
  Castorina 110, 2460-320 Rio de Janeiro, RJ, Brazil.}
\email{lhbackes@impa.br}
\author[A. Kocsard]{Alejandro Kocsard} \address{IME - Universidade
  Federal Fluminense. Rua M\'ario Santos Braga S/N, 24020-140
  Niter\'oi, RJ, Brazil}\urladdr{www.professores.uff.br/kocsard}
\email{akocsard@id.uff.br}
\thanks{} \date{\today}
\begin{document}

\maketitle

\begin{abstract}
  We prove a rigidity theorem for dominated Hölder cocycles with
  values on diffeomorphism groups of a compact manifold over
  hyperbolic homeomorphisms. More precisely, we show that if two such
  cocycles have equal periodic data, then they are cohomologous.
\end{abstract}

\maketitle

\section{Introduction}
\label{sec:intro}

Given a homeomorphism $f\colon M\carr$ and a topological group $G$, a
\emph{$G$-cocycle over $f$} is a continuous map $\alpha\colon\Z\times
M\to G$ satisfying
\begin{equation}
  \label{eq:cocycle-prop}
  \alpha^{(m+n)}(x)=\alpha^{(m)}\big(f^n(x)\big)\alpha^{(n)}(x),
  \quad\forall m,n\in\Z,\ \forall x\in M.
\end{equation}

Two cocycles $\alpha$ and $\beta$ over $f$ are said to be
\emph{cohomologous} whenever there exists a continuous map $P\colon
M\to G$, usually called \emph{transfer map}, such that
\begin{displaymath}
  \alpha^{(n)}(x)=P\big(f^n(x)\big)\beta^{(n)}(x)P(x)^{-1},
  \quad\forall n\in\Z, \forall x\in M.
\end{displaymath}
As it can be easily verified, the cohomology relation is an
equivalence one over the space of (continuous) cocycles.

In the particular case that $\beta$ is the constant function equal to
$e_G$ (where $e_G$ denotes the identity element of $G$), we say that
$\alpha$ is a \emph{coboundary.}

Many important questions in dynamical systems can be reduced to the
problem of determine whether certain cocycles are cohomologous. So, it
is not surprising that this equivalence relation has been extensively
studied in last decades (see for instance \cite{KatokRobinson} for a
survey).

As it had been already noticed in \cite{KocCohomC0Inst}, in general
the $C^0$-category is not the right one to study cohomology of
dynamical systems, and depending on the dynamical properties of the
system, different degrees of regularity are required to guaranty a
rather reasonable description of cohomology classes.

When $f$ is a hyperbolic homeomorphism (see \S~\ref{sec:preelim} for
precise definitions), it seems like Hölder-regularity (for cocycles)
is enough to study its cohomology. In fact, the seminal works of Liv\v
sic~\cite{LivCertPropHomol,LivsicCohomDynSys} claim that, when $G$
admits a compatible bi-invariant complete distance, the \emph{periodic
  data} of $f$ completely characterizes Hölder-continuous
$G$-coboundaries. More precisely, a Hölder $G$-cocycle $\alpha$ over
$f$ is a coboundary if and only if
\begin{displaymath}
  \alpha^{(n)}(p)=e_G,\quad\forall p\in\Fix(f^n).
\end{displaymath}

This result has been considerably extended to more general groups and
to higher regularity by several different authors
\cite{delaLlaveMarcoMoriyon, NiticaTorokRegularResultsSolLiv,
  NicolPollicottLivSemisimpleLieGr, delaLlaveWindsorLivThmNonComm,
  KalininLivThmMat, KocPotrieLivDiffCirc}, and such results are
usually called \emph{Liv\v{s}ic type theorems} (see also
\cite{NiticaKatokRigHighRankI} and references therein).

More generally, inspired by this results, it would be natural to ask
whether two Hölder $G$-cocycles $\alpha $ and $\beta$ over a
hyperbolic system $f$ satisfying the so called \emph{periodic orbit
  condition}, \emph{POC} for short,
\begin{equation}
  \label{eq:POC}\tag{POC}
  \alpha^{(n)}(p)=\beta^{(n)}(p),\quad\forall n\in\Z,\ \forall
  p\in\Fix(f^n), 
\end{equation}
are indeed cohomologous.

When $G$ is an Abelian group, the space of $G$-cocycles is naturally
an Abelian group itself, and coboundaries form a subgroup. Hence,
under the hypothesis of commutativity, since it can be easily verified
that two cocycles are cohomologous if and only if their difference is
a coboundary, the problems of characterization of coboundaries and
cohomology classes are completely analogous.

However, when $G$ is non-Abelian, the second problem is much more
complicated than the first one.

In the present paper we study the characterization of cohomology
classes for cocycles with values in the group $G=\Diff{r}(N)$ (\ie the
group of $C^r$-diffeomorphisms of a compact manifold $N$) and
satisfying a domination condition. The domination condition is used to
construct invariant holonomies which plays a key role in our
arguments.

This generalizes the previous result independently obtained by the
first author~\cite{BackesRigFibBunchCoc} and
Sadovskaya~\cite{SadovskayaCohomFibBunchCoc} for linear cocycles (\ie
where $G=\GL_d(\R)$) satisfying a fiber bunching condition. Similar
results have been previously gotten by
Parry~\cite{ParryLivPerPointThm}, for cocycles adimiting a
bi-invariant distance, a by Schmidt~\cite{SchmidtRemLivThNonAbCoc} for
cocycles satisfying a ``bounded distortion condition'', which is much
stronger than our domination assumption.

These paper is organized as follows: in section~\ref{sec:main-results}
we present the precise statement of the main result. In
section~\ref{sec:preelim} we give some definitions and preelimiarie
results. Section~\ref{sec:diff-grp-top} is devoted to describe a
distance in $\Diff{r}(N)$ and some auxiliary results. In
section~\ref{sec:inv-holonomies} we build the main tool used in our
proof: invariant holonomies. In sections~\ref{sec:constr-trans-map}
and \ref{sec:concluding-the-proof} we construct, in some sense,
explicitly the transfer map while section
\ref{sec:improving-regularity} is devoted to improve the regularity of
these transfer map.

\subsection{Main results}
\label{sec:main-results}

The main result of this work is the following one (see
\S~\ref{sec:preelim} for precise definitions):

\begin{theorem}
  \label{thm:main-1} Let $f\colon M\carr$ be a Lipschitz continuous
  transitive hyperbolic homeomorphism on a compact metric space
  $(M,d)$, $N$ a compact smooth manifold and
  $\alpha,\beta\colon\Z\times M\to\Diff r(N)$ two $(2r-1)$-dominated
  $\nu$-H\"older cocycles over $f$ satisfying the \emph{periodic data
    condition:}
  \begin{equation}
    \label{eq:per-data-cond}
    \alpha^{(n)}(p)=\beta^{(n)}(p), \quad\forall n\in Z,\ \forall
    p\in\Fix(f^n). 
  \end{equation}

  Then, there exists a $\nu$-Hölder continuous map $P\colon
  M\to\Diff{r-4}(N)$ such that
  \begin{equation}
    \label{eq:cohomo-eq-main-thm}
    \alpha^{(n)}(x)=P\big(f^n(x)\big)\circ\beta^{(n)}(x)\circ
    P(x)^{-1}, \quad\forall x\in M,\ \forall n\in\Z.
  \end{equation}
  Moreover, $P(M)\subset\Diff{r-1}(N)$ and if $f$ is a $C^{r-1}$
  Anosov diffeomorphism and the cocycles $\alpha$ and $\beta$ are
  $C^{r-1}$ then $P$ is $C^{r-1-\varepsilon}$ for any small
  $\varepsilon >0$.
\end{theorem}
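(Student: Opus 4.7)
The plan is to follow the Liv\v sic--Parry strategy, adapted to the non-abelian diffeomorphism-group setting through invariant holonomies, as was done in the linear case by the first author and by Sadovskaya. The $(2r-1)$-domination hypothesis will first be used to construct, for each of $\alpha$ and $\beta$, stable and unstable invariant holonomies $H^{s,\alpha}_{x,y}$, $H^{u,\alpha}_{x,y}$, $H^{s,\beta}_{x,y}$, $H^{u,\beta}_{x,y}$: for $y\in W^s_{\mathrm{loc}}(x)$ the candidate
\[
H^{s,\alpha}_{x,y}:=\lim_{n\to+\infty}\bigl(\alpha^{(n)}(y)\bigr)^{-1}\circ\alpha^{(n)}(x)
\]
should converge in a suitable $C^{r-1}$-topology, where the gain of $2r-1$ in the domination exponent is exactly what is needed to keep the $C^r$-norms of the partial compositions under control. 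The resulting holonomies will satisfy the equivariance $\alpha(y)\circ H^{s,\alpha}_{x,y}=H^{s,\alpha}_{f(x),f(y)}\circ\alpha(x)$ and depend Hölder-continuously on $(x,y)$; analogous statements hold for the unstable holonomies.

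Next, I would fix a periodic point $p_0\in M$ (which exists by transitivity together with hyperbolicity) and set $P(p_0):=\mathrm{Id}_N$; by the periodic-data condition~\eqref{eq:per-data-cond}, this is compatible with~\eqref{eq:cohomo-eq-main-thm} along the orbit of $p_0$. The transfer map is then propagated to any point accessible from $p_0$ by a finite sequence of local stable/unstable moves via
\[
P(y):=H^{\sigma,\alpha}_{x,y}\circ P(x)\circ\bigl(H^{\sigma,\beta}_{x,y}\bigr)^{-1},\qquad\sigma\in\{s,u\},
\]
the choice being forced on us by the holonomy equivariance together with the requirement that $P$ intertwine $\alpha$ and $\beta$.

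The main obstacle is showing that this $P$ is well defined, i.e.\ independent of the accessibility path used to reach $y$. Equivalently, the composition of holonomies along any closed accessibility loop based at $p_0$ must equal $\mathrm{Id}_N$. The plan is to use the Anosov closing lemma to shadow such a loop by a genuine periodic orbit $q$, and to compare the holonomy composition around the loop with the cocycle composition $\alpha^{(n)}(q)\circ\beta^{(n)}(q)^{-1}$, which equals the identity by~\eqref{eq:POC}. Balancing the exponential shadowing estimate against the Hölder regularity of the holonomies (this is where the Lipschitz assumption on $f$ enters, in order to keep track of holonomy distortion under iteration) should collapse the loop to the identity in the limit. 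Density of accessible points together with uniform Hölder estimates on the holonomies then extends $P$ to a $\nu$-Hölder map $M\to\Diff{r-4}(N)$ whose image already lies in $\Diff{r-1}(N)$, since each $P(x)$ is a composition of two $C^{r-1}$ holonomies; the apparent loss of four derivatives in the domain regularity is an artefact of the metric chosen in Section~\ref{sec:diff-grp-top} to topologize the diffeomorphism group, under which composition and inversion lose derivatives. The cohomology equation~\eqref{eq:cohomo-eq-main-thm} then follows at once from the holonomy equivariance.

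Finally, for the improved regularity when $f$ is $C^{r-1}$ Anosov and the cocycles are $C^{r-1}$, I would observe that the holonomies become $C^{r-1-\varepsilon}$ along stable and unstable leaves, and conclude via a Journé-type regularity lemma that $P$ itself is $C^{r-1-\varepsilon}$ on all of $M$.
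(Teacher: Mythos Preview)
Your proposal identifies all the right ingredients---holonomies from $(2r-1)$-domination, the Anosov closing lemma combined with the periodic data, extension by density, and Journ\'e for the final regularity statement---and these are exactly the tools the paper uses. The organization of the well-definedness step is different, however, and your version has a soft spot there.

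Rather than propagating $P$ along arbitrary accessibility paths and then proving triviality on closed loops, the paper first reduces to the case where $f$ has a \emph{fixed} point $x$ (passing to an iterate $f^{n_0}$ otherwise, and checking afterwards that the resulting $P$ also conjugates the original cocycles) and defines $P$ on the dense homoclinic set $W(x)=W^s(x)\cap W^u(x)$ by the single formula $P(y)=H^{s,\alpha}_{xy}\circ H^{s,\beta}_{yx}$. There is then no path-independence to verify; instead the key step (Lemma~\ref{lem:transfer-P-s-u-holon}) is that this stable formula coincides with the unstable one $H^{u,\alpha}_{xy}\circ H^{u,\beta}_{yx}$, which is precisely what allows H\"older estimates in both directions and hence a continuous extension to $M$. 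That lemma is proved by noting that for $y\in W(x)$ the orbit segment $f^{-n}(y),\dots,f^n(y)$ is a pseudo-periodic orbit (both endpoints converge to $x$), shadowing it by a genuine periodic point $p_n$ via the closing lemma, and invoking \eqref{eq:per-data-cond} on $p_n$.

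This is where your sketch is imprecise: an accessibility loop is a geometric path in $M$, not an orbit segment, so ``shadowing the loop by a periodic orbit'' does not literally make sense. What actually gets shadowed is the \emph{orbit} of a homoclinic point, and the holonomy identities translate that dynamical statement back into the stable$=$unstable equality. Your approach could be made to work, but only after reducing general accessibility loops to the basic $su$-rectangle based at a periodic point---which is exactly what the paper's homoclinic-class formulation gives for free. Everything else in your outline (the source of the four-derivative loss in the $d_r$ metric, the $\Diff{r-1}$ image via accessibility, Journ\'e) matches the paper.
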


At this point we would like to remark that by $P\colon
M\to\Diff{r-4}(N)$ being a $\nu$-H\"older continuous map we mean that
$P$ is $\nu$-H\"older with respect to the distance function $d_{r-4}$
given by \eqref{eq:dr-distance-def} and the cocycle $\alpha
:\mathbb{Z}\times M \to \Diff {r}(N)$ to be $C^{r-1}$ means that the
induced map $\alpha^{(1)}\colon M\times N \to N$ is $C^{r-1}$.

The strategy of proving this Theorem is closely related to the proof
of an analogous rigidity theorem for linear cocycles, proved by the
first author in \cite{BackesRigFibBunchCoc}. However, here we have
additional difficulties due to the fact that the group of
diffeomorphisms is infinite-dimensional.

The $(2r-1)$-domination condition is used to guarantee the existence
of invariant holonomies with good regularity. Using this invariant
holonomies we ``explicitly'' construct the transfer map on a dense
subset where we prove it is $\nu$-Hölder. The last step consists in
extending $P$ to the whole space $M$ and to prove that it has, at
least in some cases, a better regularity.

As a consequence of our methods we also obtain the following
\begin{theorem}
  \label{thm:main-2}
  Let $M$, $N$ and $f$ be as in Theorem~\ref{thm:main-1} and $\alpha
  ,\beta\colon\Z\times M\to\Diff r(N)$ be two $(2r-1)$-dominated
  $\nu$-H\"older cocycles over $f$. Let us assume there exists
  $P\colon M\to\Diff 1(N)$ $\nu$-H\"older such that
  \begin{displaymath}
    \alpha^{(1)}(x)=P(f(x))\circ\beta^{(1)}(x)\circ P(x)^{-1},
    \quad\forall x\in M,
  \end{displaymath}
  and assume also that there exists $x_0\in M$ such that
  $P(x_0)\in\Diff{r-1}(N)$. Then,
  $P(M)\subset\Diff{r-1}(N)$. Moreover, if $f$ is a $C^{r-1}$ Anosov
  diffeomorphism and the cocycles $\alpha$ and $\beta$ are $C^{r-1}$,
  then $P$ is $C^{r-1-\varepsilon}$, for any $\varepsilon >0$.
\end{theorem}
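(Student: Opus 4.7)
The plan is to reduce Theorem~\ref{thm:main-2} to the invariant-holonomy machinery of Section~\ref{sec:inv-holonomies}, exploiting that the hypothesis already supplies a candidate transfer map whose regularity in the fiber variable we only need to upgrade. Iterating the one-step conjugacy via \eqref{eq:cocycle-prop} yields
\begin{equation*}
  \alpha^{(n)}(x)=P\bigl(f^n(x)\bigr)\circ\beta^{(n)}(x)\circ P(x)^{-1},\qquad\forall x\in M,\ \forall n\in\Z.
\end{equation*}
I would then show that $P$ intertwines the stable and unstable holonomies provided by Section~\ref{sec:inv-holonomies}: for $y$ on the local stable manifold of $x$, using that $P\colon M\to\Diff{1}(N)$ is $\nu$-H\"older (so that $P(f^n(x))^{-1}\circ P(f^n(y))\to\mathrm{id}_N$ as $n\to+\infty$) and passing to the limit in the iterated identity above, one obtains
\begin{equation*}
  H^{s,\alpha}_{x,y}=P(y)\circ H^{s,\beta}_{x,y}\circ P(x)^{-1},
\end{equation*}
and analogously for unstable holonomies.

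Once this intertwining is established, the $\Diff{r-1}$-regularity of $P$ spreads from $x_0$ to all of $M$. The $(2r-1)$-domination ensures (see Section~\ref{sec:inv-holonomies}) that all holonomies take values in $\Diff{r-1}(N)$, so the identity
\begin{equation*}
  P(y)=H^{s,\alpha}_{x_0,y}\circ P(x_0)\circ\bigl(H^{s,\beta}_{x_0,y}\bigr)^{-1}
\end{equation*}
shows $P(y)\in\Diff{r-1}(N)$ for every $y$ on the local stable manifold of $x_0$, and symmetrically for the local unstable manifold. The local product structure then gives $P(z)\in\Diff{r-1}(N)$ on a whole neighborhood $U$ of $x_0$; applying the same argument at each point of $U$ shows that the set $A=\{x\in M:P(x)\in\Diff{r-1}(N)\}$ is open. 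The iterated cocycle identity, together with $\alpha^{(n)}(x),\beta^{(n)}(x)\in\Diff{r}(N)$, shows that $A$ is $f$-invariant. Since $f$ is transitive, any two points in $M$ may be joined by a finite chain of local stable and unstable segments, and transporting $P(x_0)$ along such a chain via the intertwined holonomies yields $A=M$.

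For the final assertion, under the assumption that $f$ is $C^{r-1}$ Anosov and that both $\alpha$ and $\beta$ are $C^{r-1}$, the holonomies constructed in Section~\ref{sec:inv-holonomies} additionally depend on the base point in a $C^{r-1-\varepsilon}$ way; the two intertwining identities then show that $P$ is $C^{r-1-\varepsilon}$ along every stable leaf and along every unstable leaf, and a Journ\'e-type regularity lemma, as invoked in Section~\ref{sec:improving-regularity} for the proof of Theorem~\ref{thm:main-1}, glues these transverse regularities into the asserted global $C^{r-1-\varepsilon}$ regularity. The main obstacle I anticipate lies in the last step of the second paragraph: going from ``$A$ open, $f$-invariant and non-empty'' to ``$A=M$'' requires exploiting the global stable/unstable manifold structure together with transitivity rather than transitivity alone; controlling the intertwining limit with only a $\nu$-H\"older $P\colon M\to\Diff{1}(N)$ available a~priori also requires the $C^1$-contraction estimates built into the $(2r-1)$-domination hypothesis.
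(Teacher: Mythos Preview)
Your proposal is correct and follows essentially the same route as the paper: the key identity $P(y)=H^{s,\alpha}_{x,y}\circ P(x)\circ (H^{s,\beta}_{x,y})^{-1}$ (and its unstable analogue) is exactly the content of Lemma~\ref{lem:P-restricted-to-su-manifolds}, and the spreading of $\Diff{r-1}$-regularity from $x_0$ along concatenations of stable and unstable arcs, followed by Journ\'e's lemma in the Anosov case, is precisely the argument of Theorem~\ref{thm:improving-regularity}. Two minor remarks: the paper bypasses the ``$A$ open and $f$-invariant'' detour and appeals directly to accessibility of transitive hyperbolic homeomorphisms, which is the mechanism you yourself identify as the real one; and your worry about needing $C^1$-contraction for the intertwining limit is unnecessary, since (as noted in Remark~\ref{rem:P-restricted-to-su-manifolds}) the convergence is taken in the $C^0$ distance $\tilde d$ and only $1$-domination ($\rho\lambda^\nu<1$) together with H\"older continuity of $P$ in $\tilde d$ is required.
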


\section{Preeliminaries and notations}
\label{sec:preelim}

\subsection{Hyperbolic homeomorphisms}
\label{sec:hyperb-homeo}

Let $(M,d)$ be a compact metric space and $f\colon M\carr $ be a
homeomorphism. Given any $x\in M$ and $\varepsilon >0$, define the
\emph{local stable} and \emph{unstable sets} by
\begin{align*}
  W^s_\eps(x,f) &:= \left\{y\in M : d(f^n(x),f^n(y))\leq\eps,\ \forall
    n \geq 0\right\}, \\
  W^u_\eps(x,f) &:= \left\{y\in M : d(f^n(x),f^n(y))\leq\eps,\ \forall
    n \leq 0\right\},
\end{align*}
respectively. Where there is no risk of ambiguity, we just write
$W^s_\varepsilon(x)$ instead of $W^s_\varepsilon(x,f)$, and the same
holds for the local unstable set.

Following \cite{AvilaVianaExtLyapInvPrin}, we introduce the following

\begin{definition}
  \label{def:hyperbolic-homeo}
  A homeomorphism $f\colon M\carr$ is said to be \emph{hyperbolic with
    local product structure} (or just \emph{hyperbolic} for short)
  whenever there exist constants $C_1,\eps,\tau>0$ and $\lambda\in
  (0,1)$ such that the following conditions are satisfied:
  \begin{itemize}
  \item[(h1)] $d(f^n(y_1),f^n(y_2)) \leq C_1\lambda^n d(y_1,y_2)$,
    $\forall x\in M$, $\forall y_1,y_2 \in W^s_\eps (x)$, $\forall
    n\geq 0$;
  \item[(h2)] $d(f^{-n}(y_1), f^{-n}(y_2)) \leq C_1\lambda^n
    d(y_1,y_2)$, $\forall x\in M$, $\forall y_1,y_2 \in W^u_\eps (x)$,
    $\forall n\geq 0$;
  \item[(h3)] If $d(x,y)\leq\tau$, then $W^s_\eps(x)$ and
    $W^u_\eps(y)$ intersect in a unique point which is denoted by
    $[x,y]$ and depends continuously on $x$ and $y$.
  \end{itemize}
\end{definition}

For such homeomorphisms, one can define the \emph{stable} and
\emph{unstable sets} by
\begin{displaymath}
  W^s(x,f):= \bigcup_{n\geq 0} f^{-n}\big(W^s_\varepsilon(f^n(x))\big)
  \quad\text{and}\quad W^u(x,f):= \bigcup_{n\geq 0} 
  f^{n}\big(W^u_\varepsilon(f^{-n}(x))\big),
\end{displaymath}
respectively.

Notice that shifts of finite type and basic pieces of Axiom A
diffeomorphisms are particular examples of hyperbolic homeomorphisms
with local product structure (see for instance \cite[Chapter IV,
\S~9]{ManeBook} for details).

\subsection{Cocycles and cohomology}
\label{sec:cocycles-cohomology}

Given any homeomorphism $f\colon M\carr$ and a topological group $G$,
a \emph{$G$-cocycle} over $f$ is a continuous map
$\alpha\colon\Z\times M\to G$ such that
\begin{displaymath}
  \alpha^{(m+n)}(x)=\alpha^{(m)}\big(f^n(x)\big)\alpha^{(n)}(x),
  \quad\forall m,n\in\Z,\ \forall x\in M.
\end{displaymath}

On the other hand, if $H$ another topological group such that
$G\subset H$ just as groups (\ie their topologies are not necessarily
related), then two $G$-cocycles $\alpha$ and $\beta$ over $f$ are said
to be \emph{$H$-cohomologous} whenever there exists a continuous map
$P\colon M\to H$ such that
\begin{displaymath}
  \alpha^{(n)}(x)=P\big(f^n(x)\big)\beta^{(n)}(x)P(x)^{-1},
  \quad\forall n\in\Z,\ \forall x\in M.
\end{displaymath}

\subsection{Diffeomorphism groups and their topologies}
\label{sec:diff-grp-top}

In this subsection we recall some concepts about $C^r$-topologies on
groups of diffeomorphisms and to do that, we mainly follow \cite[\S
5.2]{delaLlaveWindsorLivThmNonComm}. From now on, $N$ will denote a
compact smooth Riemannian manifold.  We write $\Diff{r}(N)$ for the
group of $C^r$-diffeomorphisms of $N$.

It is well known that $\Diff r(N)$ has a Banach manifold structure
modeled on the space of $C^r$ vector fields with local charts given by
composition with the the exponential map.

The topology induced by this Banach manifold structure coincides with
the one induced by the \emph{$C^r$-distance} $d_r$ which is defined as
follows:

Let us consider a smooth curve $p:\R\to\Diff r(N)$. Then, for any
$t\in\R$ and $y\in N$ there exists a neighborhood $V$ of $t$ and $U$
of $0\in U\subset T_yN$ such that for any $s\in V$ the local
representative $\tilde{p}(s)_y:U\subset T_yN\to T_{p(t)(y)} $ is
defined by
\begin{displaymath}
  p(s)(\exp _yv)=\exp _{p(t)(y)}(\tilde{p}(s)_y(v)).
\end{displaymath}
Using the standard identification of the tangent space to a linear
space with the linear space itself, we obtain
\begin{displaymath}
  D^n\tilde{p}(s)_y\colon T_yN^{\otimes n}\rightarrow T_{p(t)(y)}N.
\end{displaymath}

Now, since this is a curve in a fixed linear space, we can
differentiate it with respect to $s$ to get
\begin{displaymath}
  \frac{d}{dt}D^n_yp(t):=\frac{d}{ds}D^n\tilde{p}(s)_y\Big|_{s=t}(0).  
\end{displaymath}

Now, consider a path $p\colon[0,1]\to\Diff r(N)$ such that
$\frac{d}{dt}D^np_t$ is piecewise continuous in $t$, for each $0\leq
n\leq r$. Such $p$ will be called a piecewise $C^1$ path in $\Diff
r(N)$. We define the length of $p$ by
\begin{displaymath}
  l_r(p)=\max_{0\leq n\leq r} \max_{y\in N}\int^1_0
  \norm{\frac{d}{dt}D^n_yp_t} dt,
\end{displaymath}
and its partial length is given by
\begin{displaymath}
  l_r(p;s):= \max_{0\leq n\leq r} \max_{y\in N}
  \int^s_0\norm{\frac{d}{dt}D^n_yp_t} dt. 
\end{displaymath}

This length structure on $\Diff r(N)$ allows us to define a metric
$d_r$ as follows: given $h,g\in\Diff r(N)$ we define
\begin{equation}
  \label{eq:dr-distance-def}
  d_r(h,g):= \inf_{p\in \mathcal{P}}\max \left\{l_r(p),
    l_r(p^{-1})\right\} 
\end{equation}
where
\begin{displaymath}
  \mathcal{P}:=\left\{ p\in C^1_{\mathrm{pw}}\big([0,1], \Diff
    r(N)\big) : p(0)=h,\ p(1)=g\right\}
\end{displaymath}

As it was already mentioned above, this metric $d_r$ induces the usual
$C^r$-uniform topology on $\Diff{r}(N)$ which coincides with the one
induced by the Banach manifold structure.

Finishing this subsection, we recall four lemmas from \cite[Lemmas
5.1, 5.2, 5.3 and 5.5]{delaLlaveWindsorLivThmNonComm} that we will use
in forthcoming sections:

\begin{lemma}
  \label{lem:norm-Cr-vs-length}
  For every $C^1$-path $p\colon\R\to\Diff r(N)$, every $1\leq k\leq
  r$, and all $s>0$, it holds
  \begin{displaymath}
    \norm{D^kp_s} \leq e^{\kappa
      l_0(p;s)}\left(l_k(p;s)+\norm{D^kp_0}\right).
  \end{displaymath}
  where $\norm{\cdot}$ denotes the operator norm for linear maps from
  $(TN)^{\bigotimes k}$ to $TN$, and $\kappa$ is a real constant
  depending on the Riemannian metric of $N$. In particular, it holds
  \begin{displaymath}
    \norm{Dp_s}_{r-1}\leq e^{\kappa l_0(p;s)}
    \left(l_r(p;s)+\norm{Dp_0}_{r-1}\right). 
  \end{displaymath}
\end{lemma}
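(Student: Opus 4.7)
The plan is a Grönwall-type argument. Fix $y\in N$ and $1\leq k\leq r$, and consider the scalar function
\[
\phi_y(t):=\|D^k_y p_t\|.
\]
Although $D^k_y p_t$ lives in the moving vector space $\Hom(T_yN^{\otimes k},T_{p_t(y)}N)$, the real-valued function $\phi_y$ is well-defined and (piecewise) differentiable, and a pointwise differential inequality for $\phi_y$ can be integrated to yield the claim.

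First I would establish the differential inequality
\[
\dot\phi_y(t)\ \leq\ \left\|\frac{d}{dt}D^k_y p_t\right\|\ +\ \kappa\,\phi_y(t)\,\left\|\frac{d}{dt}p_t(y)\right\|,
\]
where $\kappa$ is a constant depending only on the Riemannian metric of $N$. The intuition is this: by the paper's own definition, $\frac{d}{dt}D^k_y p_t$ is computed in the exponential chart at $p_t(y)$, and therefore captures only the \emph{intrinsic} rate of change of $D^k_y p_t$ in that chart. When one compares the norm at $t$ and $t+h$, there is an additional contribution coming from the fact that the target fibre $T_{p_t(y)}N$ itself is moving; this contribution is linear in $D^k_y p_t$ (so proportional to $\phi_y(t)$) and proportional to the velocity $\|\dot p_t(y)\|$, with the proportionality constant bounded by the Christoffel symbols of the Levi-Civita connection on $N$, which are uniformly bounded thanks to the compactness of $N$. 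This is where $\kappa$ comes from.

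Next I would apply the classical integrating-factor trick: rewriting the inequality as
\[
\frac{d}{dt}\!\left(\phi_y(t)\,e^{-\kappa\int_0^t\|\dot p_u(y)\|\,du}\right)\ \leq\ \left\|\frac{d}{dt}D^k_y p_t\right\|\,e^{-\kappa\int_0^t\|\dot p_u(y)\|\,du},
\]
integrating from $0$ to $s$ and multiplying back by $e^{\kappa\int_0^s\|\dot p_u(y)\|\,du}$ gives
\[
\phi_y(s)\ \leq\ e^{\kappa\int_0^s\|\dot p_u(y)\|\,du}\left(\phi_y(0)+\int_0^s\left\|\frac{d}{dt}D^k_y p_t\right\|\,dt\right).
\]
Taking the maximum over $y\in N$ and using that, by the definitions of $l_0(p;s)$ and $l_k(p;s)$, both $\int_0^s\|\dot p_u(y)\|\,du\leq l_0(p;s)$ and $\int_0^s\|\frac{d}{dt}D^k_y p_t\|\,dt\leq l_k(p;s)$ hold uniformly in $y$, yields
\[
\|D^k p_s\|\ =\ \max_{y\in N}\phi_y(s)\ \leq\ e^{\kappa l_0(p;s)}\bigl(\|D^k p_0\|+l_k(p;s)\bigr).
\]
The ``In particular'' statement is then immediate: taking the maximum over $1\leq k\leq r$ of these estimates and noting that $l_k(p;s)\leq l_r(p;s)$ gives the bound for $\|Dp_s\|_{r-1}$.

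The main obstacle is step one: correctly identifying and bounding the correction term that appears when differentiating the norm of a section of a moving fibre bundle. This can be made rigorous either by an explicit coordinate computation in the exponential chart, expanding $\exp_{p_t(y)}$ to second order and estimating the remainder via bounds on the curvature, or, more invariantly, by parallel-transporting $D^k_y p_t$ back to $T_{p_0(y)}N$ along the curve $t\mapsto p_t(y)$ and computing the derivative of the transported (fixed-fibre) object, the difference being precisely the Christoffel-symbol contribution. Either way, the compactness of $N$ is essential to absorb the geometric constants into a single $\kappa$.
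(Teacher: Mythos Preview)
The paper does not give a proof of this lemma: it is quoted (together with the three lemmas that follow it) directly from de~la~Llave--Windsor, where it appears as Lemma~5.1. So there is no in-paper argument to compare yours against; what can be said is whether your sketch would reproduce the result.

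Your Gr\"onwall strategy is the natural one, and for $k=1$ it works exactly as you describe---indeed, in normal coordinates centred at $p_t(y)$ the Christoffel symbols vanish at the origin, so the paper's $\frac{d}{dt}D_yp_t$ coincides with the covariant derivative of $D_yp_t$ along $t\mapsto p_t(y)$; since parallel transport is an isometry, the correction term is then actually superfluous. For $k\geq 2$, however, your differential inequality is incomplete. When one passes from the exponential chart at $p_t(y)$ to the one at $p_s(y)$, the $k$-th derivative transforms via Fa\`a~di~Bruno applied to the transition map $\psi_s=\exp_{p_s(y)}^{-1}\circ\exp_{p_t(y)}$. Since $\psi_t=\mathrm{id}$, differentiating at $s=t$ leaves---besides the top-order contribution $\kappa\,\phi_y\,\|\dot p_t(y)\|$ that you isolated---additional terms of the shape
\[
\bigl(\partial_s D^m\psi_s\big|_{s=t}\bigr)(0)\bigl[D^{k_1}_y p_t,\dots,D^{k_m}_y p_t\bigr],\qquad k_1+\cdots+k_m=k,\ m\geq 2,
\]
each bounded by a curvature constant times $\|\dot p_t(y)\|\prod_i\|D^{k_i}_y p_t\|$. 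These involve strictly lower-order derivatives of $p_t$ and are not captured by $\kappa\,\phi_y\,\|\dot p_t(y)\|$. The remedy is an induction on $k$: feed the already-established bounds for $j<k$ into these extra terms before running Gr\"onwall, and then verify that the resulting estimate still has the stated form (recall that $l_k(p;s)$ already dominates all $l_j(p;s)$ for $j\leq k$ by definition). Carrying this bookkeeping out is precisely the work done in the cited reference.
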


\begin{lemma}
  \label{lem:l-r-1-pre-compos-path}
  Let $p\in C^1\big([0,1],\Diff{r-1}(N)\big)$ and $h\in\Diff
  r(N)$. Then, there exists a constan $C>0$ depending just on $r$ such
  that
  \begin{displaymath}
    l_{r-1}\big(h\circ p_s\big) \leq C\norm{Dh}_{r-1}
    \left(1+\max_{s\in [0,1]}\norm{Dp_s}_{r-2}\right)^{r-1}
    l_{r-1}(p_s),\quad \forall s\in [0,1].
  \end{displaymath}
  On the other hand, if $p\in C^1\big([0,1],\Diff{r}(N)\big)$ and
  $h\in\Diff r(N)$, then
  \begin{displaymath}
    l_{r}(p_s \circ h)\leq C\max_{k_1,\ldots ,k_r}\norm{D^1h}^{k_1}
    \cdots \norm{D^rh}^{k_r}l_r(p_s), \quad\forall s\in [0,1],
  \end{displaymath}
  where the maximum is taken over all $k_1,\ldots ,k_r\geq 0$ such
  that
  \begin{displaymath}
    k_1+2k_2+\ldots +rk_r\leq r.
  \end{displaymath}
  Crudely, this may be estimated by
  \begin{displaymath}
    l_{r}(p_s \circ h)\leq C
    \left(1+\norm{Dh}_{r-1}\right)^{r}l_r(p_s), \quad\forall s\in [0,1].
  \end{displaymath}
\end{lemma}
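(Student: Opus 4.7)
My plan for both bounds is to apply the Faà di Bruno formula to expand the spatial derivatives of a composition as a sum of products of derivatives of the factors, differentiate in $t$, and then integrate the resulting pointwise bounds in $t$. What distinguishes the two cases is which factor carries the $t$-dependence.

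First I would treat the pre-composition estimate. For each $0\leq n\leq r-1$ and $y\in N$ the expansion reads
\[
D^n_y(h \circ p_t) = \sum_{\pi \vdash n} c_\pi\, D^{|\pi|}h\big|_{p_t(y)}\bigl[D^{\pi_1}p_t(y),\dots,D^{\pi_{|\pi|}}p_t(y)\bigr],
\]
the sum running over partitions $\pi=(\pi_1,\dots,\pi_{|\pi|})$ of $n$. Differentiating in $t$, exactly one factor per term picks up a time derivative: either some $\frac{d}{dt}D^{\pi_j}p_t$ (with $\pi_j\leq n\leq r-1$), or the composed factor $D^{|\pi|}h|_{p_t(y)}$, which yields $D^{|\pi|+1}h|_{p_t(y)}\cdot\frac{d}{dt}p_t(y)$ with $|\pi|+1\leq r$. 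I would bound each $h$-factor by $\|Dh\|_{r-1}$, each spatial $p_t$-factor not carrying the time derivative by $1+\|Dp_t\|_{r-2}$, and note that at most $r-1$ such factors appear per term. Integrating in $t$ and taking the maximum over $y$ and $n$ would then yield the stated bound on $l_{r-1}(h\circ p_s)$, with all multinomial constants absorbed into $C=C(r)$.

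Next I would turn to the post-composition estimate, which is cleaner because $h$ does not depend on $t$. The analogous expansion reads
\[
D^n_y(p_t\circ h)=\sum_{\pi\vdash n} c_\pi\, D^{|\pi|}p_t\big|_{h(y)}\bigl[D^{\pi_1}h(y),\dots,D^{\pi_{|\pi|}}h(y)\bigr],
\]
so the time derivative falls entirely on the single $D^{|\pi|}p_t$ factor. Each resulting term is thus bounded by $\|\tfrac{d}{dt}D^{|\pi|}p_t\|$ times a monomial $\|D^1h\|^{k_1}\cdots\|D^rh\|^{k_r}$, where $k_i$ is the number of parts of $\pi$ equal to $i$, and the partition condition $\sum_i i\,k_i=n\leq r$ is exactly the constraint appearing in the statement. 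Integrating, maximising, and absorbing constants gives the fine estimate; the crude bound follows from $\sum_i k_i \leq \sum_i i\,k_i \leq r$, so every such monomial is dominated by $(1+\|Dh\|_{r-1})^r$.

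The one genuinely delicate point I anticipate is making the Faà di Bruno expansions rigorous within the length structure of Section~\ref{sec:diff-grp-top}, where $D^n_y p_t$ is defined through the exponential-chart representative $\tilde p(s)_y$ rather than intrinsically. I expect this to reduce to verifying that for each $t_0$ one can choose a common chart around $p_{t_0}(y)$ on a neighbourhood of $t_0$ in which the chain rule and the $s$-differentiation commute as expected; once this technical point is dispatched, the combinatorial estimates above are entirely formal and all multinomial coefficients disappear into the universal constant $C(r)$.
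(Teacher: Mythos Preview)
The paper does not give its own proof of this lemma: it is one of four results explicitly \emph{recalled} from de~la~Llave and Windsor \cite[Lemmas~5.1--5.3 and~5.5]{delaLlaveWindsorLivThmNonComm}, so there is no in-paper argument to compare against. Your Fa\`a~di~Bruno approach---expanding $D^n(h\circ p_t)$ and $D^n(p_t\circ h)$ over partitions, differentiating in $t$ term by term, and reading off which factors contribute to the length integrand---is precisely the method used in that reference, and your identification of the partition constraint $\sum_i i k_i\leq r$ with the monomial bound is correct. The chart-compatibility issue you flag is real but routine, and is handled in \cite{delaLlaveWindsorLivThmNonComm} exactly as you anticipate: locally in $t$ one works in a fixed exponential chart based at $p_{t_0}(y)$, where the Fa\`a~di~Bruno expansion and the $s$-derivative are ordinary Euclidean computations.
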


\begin{lemma}
  \label{lem:dr-1-distance-pre-compos}
  Let $C>0$ and $r\in\N$ be arbitrary. Suppose $h\in\Diff r(N)$ and
  $g_1,g_2\in\Diff{r-1}(N)$. Then, there exists a constant $C'=C'(C,r,N)>0$
  depending only on $C$, $r$ and the manifold $N$, such that
  \begin{align*}
    d_{r-1}\left(h\circ g_1,h\circ g_2\right)& < C'd_{r-1}(g_1,g_2), \\
    d_{r-1}\left(g_1\circ h,g_2\circ h\right)&<C'd_{r-1}(g_1,g_2),
  \end{align*}
  whenever
  \begin{displaymath}
    d_r(h,Id)<C,\ d_{r-1}(g_1,Id)<C,\ \text{and } d_{r-1}(g_2,Id)<C.
  \end{displaymath}
\end{lemma}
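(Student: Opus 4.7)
The plan is to reduce both inequalities to length estimates on explicit competitor paths, combining Lemma~\ref{lem:l-r-1-pre-compos-path} (to bound the $l_{r-1}$-length of a post- or pre-composition) with Lemma~\ref{lem:norm-Cr-vs-length} (to turn the distance hypotheses $d_r(h,\mathrm{Id})<C$ and $d_{r-1}(g_i,\mathrm{Id})<C$ into pointwise $C^k$-bounds on derivatives).

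Fix $\epsilon>0$ and choose a piecewise $C^1$ path $p\colon[0,1]\to\Diff{r-1}(N)$ joining $g_1$ and $g_2$ with
\[
  \max\bigl\{l_{r-1}(p),\,l_{r-1}(p^{-1})\bigr\}<d_{r-1}(g_1,g_2)+\epsilon.
\]
For the first inequality, I would take the competitor $s\mapsto h\circ p_s$, which joins $h\circ g_1$ and $h\circ g_2$. The first bound in Lemma~\ref{lem:l-r-1-pre-compos-path} gives
\[
  l_{r-1}(h\circ p_s)\leq C\,\norm{Dh}_{r-1}\bigl(1+\max_{s}\norm{Dp_s}_{r-2}\bigr)^{r-1}l_{r-1}(p).
\]
The factor $\norm{Dh}_{r-1}$ is controlled in terms of $C$ via Lemma~\ref{lem:norm-Cr-vs-length} applied to an almost-minimizing path from $\mathrm{Id}$ to $h$ in $\Diff{r}(N)$. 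For the intermediate-derivative term I would apply Lemma~\ref{lem:norm-Cr-vs-length} to the path $p$ itself, obtaining
\[
  \norm{Dp_s}_{r-2}\leq e^{\kappa l_0(p;s)}\bigl(l_{r-1}(p;s)+\norm{Dg_1}_{r-2}\bigr),
\]
where $\norm{Dg_1}_{r-2}$ is bounded by the same lemma applied to an almost-minimizing path from $\mathrm{Id}$ to $g_1$ (using $d_{r-1}(g_1,\mathrm{Id})<C$), and $l_0(p;s)\leq l_{r-1}(p;s)<d_{r-1}(g_1,g_2)+\epsilon\leq 2C+\epsilon$. Putting everything together yields $l_{r-1}(h\circ p_s)\leq C'l_{r-1}(p)$ for some $C'=C'(C,r,N)$. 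The same scheme applied to the reversed path $(h\circ p_s)^{-1}=p_s^{-1}\circ h^{-1}$ bounds $l_{r-1}((h\circ p)^{-1})$; note that $d_r(h^{-1},\mathrm{Id})<C$ is automatic from the symmetry of the definition~\eqref{eq:dr-distance-def}, and $\norm{Dp_s^{-1}}_{r-2}$ is controlled by applying Lemma~\ref{lem:norm-Cr-vs-length} to $p^{-1}$ with base point $g_2^{-1}$, whose norm is bounded via $d_{r-1}(g_2,\mathrm{Id})<C$. Letting $\epsilon\to0$ gives the first inequality.

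For the second inequality I would use the competitor $s\mapsto p_s\circ h$ and invoke the $(r-1)$-analog of the second bound in Lemma~\ref{lem:l-r-1-pre-compos-path}, namely
\[
  l_{r-1}(p_s\circ h)\leq C\bigl(1+\norm{Dh}_{r-2}\bigr)^{r-1}l_{r-1}(p_s),
\]
obtained by the same Fa\`a di Bruno chain-rule computation with $r$ replaced by $r-1$. Here $\norm{Dh}_{r-2}\leq\norm{Dh}_{r-1}$ is again controlled by $d_r(h,\mathrm{Id})<C$. The inverse path $(p_s\circ h)^{-1}=h^{-1}\circ p_s^{-1}$ is then handled by the first statement of Lemma~\ref{lem:l-r-1-pre-compos-path} (this time applied to $h^{-1}$ and $p^{-1}$), together with the same uniform bound on $\norm{Dp_s^{-1}}_{r-2}$ as above. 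Taking $\epsilon\to 0$ concludes.

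The main technical subtlety is that $p$ is only a near-minimizer of length, so its intermediate values $p_s$ might a priori carry unboundedly large higher-order derivatives even when $l_{r-1}(p)$ is small. The mechanism that saves the argument is Lemma~\ref{lem:norm-Cr-vs-length}: it bounds the $C^{r-1}$-norm of $Dp_s$ along the path by the $C^{r-1}$-norm of the endpoint $Dg_1$ plus the partial length, both of which are controlled by the hypotheses. It is precisely this step that forces the constant $C'$ to depend on the ambient radius $C$ and not only on $r$ and $N$.
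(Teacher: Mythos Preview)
The paper does not actually prove this lemma: it is quoted verbatim from \cite[Lemma~5.3]{delaLlaveWindsorLivThmNonComm} along with Lemmas~\ref{lem:norm-Cr-vs-length}, \ref{lem:l-r-1-pre-compos-path} and \ref{lem:dr-distance-cocycles}, so there is no in-paper proof to compare against.

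That said, your sketch is correct and is essentially the argument one finds in de~la~Llave--Windsor. You have identified the right mechanism: take a near-optimal path $p$ between $g_1$ and $g_2$, push it through the composition, and use Lemma~\ref{lem:l-r-1-pre-compos-path} to control the new length; the crucial point---that the factor $(1+\max_s\norm{Dp_s}_{r-2})^{r-1}$ must be bounded in terms of the data---is handled exactly as you say, via Lemma~\ref{lem:norm-Cr-vs-length} together with the hypothesis $d_{r-1}(g_i,\mathrm{Id})<C$. Two minor remarks: (i) in your treatment of the inverse of the first competitor, $(h\circ p_s)^{-1}=p_s^{-1}\circ h^{-1}$, you only need the \emph{second} estimate of Lemma~\ref{lem:l-r-1-pre-compos-path} (composition on the right by a fixed map), which does not require a bound on $\norm{Dp_s^{-1}}_{r-2}$; that bound is only needed later, for $(p_s\circ h)^{-1}=h^{-1}\circ p_s^{-1}$, where the \emph{first} estimate applies. (ii) The base point of $p^{-1}$ is $g_1^{-1}$ rather than $g_2^{-1}$, but since $d_{r-1}(g_i^{-1},\mathrm{Id})=d_{r-1}(g_i,\mathrm{Id})<C$ for both $i$, this is immaterial.
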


\begin{lemma}
  \label{lem:dr-distance-cocycles}
  Given a Lipschitz continuous cocycle $\alpha\colon\Z\times M\to\Diff
  r(N)$ over a homeomorphism $f\colon M\carr$, let us define define
  \begin{displaymath}
    \rho _0:=\max_{w\in M}d_0\big(\alpha^{(1)}(w),Id_N\big),
  \end{displaymath}
  and
  \begin{displaymath}
    \rho_1:=\max_{w\in M} \max\left\{\norm{D\alpha^{(1)}(w)},
      \norm{D\alpha^{(1)}(w)^{-1}}\right\}.
  \end{displaymath}
  Then we have the following estimates for $m\leq r$
  \begin{displaymath}
    d_0\big(\alpha ^{(n)}(w),Id_N\big)\leq \rho _0\abs{n},\ \text{and
    }\norm{D^m\alpha ^{(n)}(w)}\leq C\rho_1^{m\abs{n}}, 
  \end{displaymath}
  for all $(n,w)\in \Z\times M$.
\end{lemma}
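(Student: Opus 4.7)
The plan is to prove both estimates by induction on $\abs{n}$ using the cocycle factorization
\[
  \alpha^{(n)}(w)=\alpha^{(1)}\bigl(f^{n-1}(w)\bigr)\circ\alpha^{(1)}\bigl(f^{n-2}(w)\bigr)\circ\cdots\circ\alpha^{(1)}(w),\quad n\geq 1,
\]
and to reduce the case $n<0$ to the positive case by applying the same analysis to the dual cocycle $\gamma^{(n)}(w):=\alpha^{(-n)}(w)$ over $f^{-1}$, whose generator $\gamma^{(1)}(w)=\alpha^{(1)}(f^{-1}w)^{-1}$ has the same $\rho_0$ (by inverse-invariance of $d_0$, read directly off the length-metric definition \eqref{eq:dr-distance-def}) and the same $\rho_1$ (by the manifestly symmetric definition of the latter). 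For the $C^0$ bound the key observation is the subadditivity
\[
  d_0(h_1\circ h_2,Id_N)\leq d_0(h_1,Id_N)+d_0(h_2,Id_N),
\]
which comes from the right-composition invariance of $d_0$: any piecewise-$C^1$ path from $Id_N$ to $h_1$, composed on the right with the diffeomorphism $h_2$, becomes a path from $h_2$ to $h_1\circ h_2$ with the same $l_0$-length. Iterating this inequality along the factorization yields $d_0(\alpha^{(n)}(w),Id_N)\leq\sum_{i=0}^{n-1}d_0\bigl(\alpha^{(1)}(f^iw),Id_N\bigr)\leq n\rho_0$.

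For the derivative bound, set $\rho_k:=\sup_{w\in M}\norm{D^k\alpha^{(1)}(w)}$ for each $1\leq k\leq r$; these are finite by continuity of $\alpha^{(1)}\colon M\to\Diff{r}(N)$ together with compactness of $M$. I would prove by induction on $n\geq 1$ that for every $1\leq m\leq r$ there exists a constant $C_m=C_m(m,r,N,\rho_1,\ldots,\rho_r)$ \emph{independent of $n$} such that $\norm{D^m\alpha^{(n)}(w)}\leq C_m\rho_1^{mn}$. The inductive step rests on Fa\`a di Bruno's formula applied to $\alpha^{(n)}=\alpha^{(1)}(f^{n-1}w)\circ\alpha^{(n-1)}$: the derivative $D^m\alpha^{(n)}(w)$ decomposes as a finite sum, indexed by partitions $\pi=\{B_1,\ldots,B_{\abs{\pi}}\}$ of $\{1,\ldots,m\}$, of terms of the form
\[
  D^{\abs{\pi}}\alpha^{(1)}\bigl(f^{n-1}w\bigr)\Bigl(D^{\abs{B_1}}\alpha^{(n-1)}(w),\ldots,D^{\abs{B_{\abs{\pi}}}}\alpha^{(n-1)}(w)\Bigr).
\]
Taking operator norms, applying the induction hypothesis, and invoking the crucial partition identity $\sum_j\abs{B_j}=m$, each summand is bounded by $\rho_{\abs{\pi}}\bigl(\prod_jC_{\abs{B_j}}\bigr)\rho_1^{m(n-1)}$, so the full sum is $\leq D_m\rho_1^{m(n-1)}$ for a constant $D_m$ depending only on $m,r,N$ and on $\rho_1,\ldots,\rho_r,C_1,\ldots,C_{m-1}$. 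Since $\rho_1\geq 1$ (forced by $D\alpha^{(1)}(w)\cdot D\alpha^{(1)}(w)^{-1}=Id$), choosing $C_m\geq D_m$ yields $D_m\rho_1^{m(n-1)}\leq C_m\rho_1^{mn}$ and closes the induction.

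The main obstacle is precisely this bookkeeping: guaranteeing that $C_m$ can be chosen independently of $n$. The partition identity $\sum_j\abs{B_j}=m$ is the linchpin, as it is what makes the $n$-dependent powers of $\rho_1$ in the Fa\`a di Bruno expansion combine cleanly into the single factor $\rho_1^{m(n-1)}$, rather than proliferating with the depth of the composition. The higher-derivative bounds for the dual generator $\gamma^{(1)}=\alpha^{(1)}(f^{-1}w)^{-1}$ needed to run the induction in the negative direction are obtained by differentiating the identity $h\circ h^{-1}=Id_N$ and solving inductively for $D^k(h^{-1})$ in terms of $D^jh$ with $j\leq k$ and $(Dh)^{-1}$; all such quantities are uniformly bounded in $w$ by compactness of $M$, which reduces the $n<0$ case to the positive-$n$ argument above.
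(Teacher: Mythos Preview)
The paper does not prove this lemma; it is quoted from \cite[Lemma~5.5]{delaLlaveWindsorLivThmNonComm}. Judged on its own terms, your sketch has two gaps.

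For the $d_0$ estimate you invoke right-invariance of $d_0$, justified by $l_0(p\circ h_2)=l_0(p)$. That identity is correct, but the metric \eqref{eq:dr-distance-def} is an infimum of $\max\{l_0(p),l_0(p^{-1})\}$, and the inverse of the right-translated path is $(p\circ h_2)^{-1}=h_2^{-1}\circ p^{-1}$, whose $l_0$-length acquires a factor $\norm{Dh_2^{-1}}$ from the left composition. So right-invariance of $d_0$---and hence your subadditivity $d_0(h_1h_2,Id)\leq d_0(h_1,Id)+d_0(h_2,Id)$---is not established by this argument: the concatenated path that controls $l_0(P)$ does not simultaneously control $l_0(P^{-1})$, and the symmetric construction has the opposite defect.

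For the derivative estimate, your assertion that $D_m$ depends only on $C_1,\dots,C_{m-1}$ overlooks the trivial partition $\pi=\{\{1,\dots,m\}\}$, which contributes the term $D^1\alpha^{(1)}\!\cdot D^m\alpha^{(n-1)}$, bounded by $\rho_1\cdot C_m\,\rho_1^{m(n-1)}$---involving $C_m$ itself. The step ``choose $C_m\geq D_m$'' is then circular. The standard repair is to isolate this leading term, obtaining the linear recursion
\[
\norm{D^m\alpha^{(n)}}\leq\rho_1\,\norm{D^m\alpha^{(n-1)}}+E_m\,\rho_1^{m(n-1)},
\]
with $E_m$ genuinely depending only on $C_1,\dots,C_{m-1}$ and $\rho_2,\dots,\rho_m$; iterating in $n$ gives a geometric sum bounded by $C_m\rho_1^{mn}$ when $\rho_1>1$ and $m\geq 2$. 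In effect one needs a double induction (outer on $m$, inner on $n$), not a single induction on $n$ with all $C_m$ chosen simultaneously.
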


\subsection{Hölder cocycles and domination}
\label{sec:hold-cocycl-domination}

Let us consider a hyperbolic homeomorphism $f\colon M\carr$ on a
compact metric space $(M,d)$ and $\alpha\colon\Z\times
M\to\Diff{r}(N)$ be a cocycle over $f$, where $N$ denotes a smooth
compact manifold. Given $\nu>0$, we say that $\alpha$ is a
\emph{$\nu$-Hölder cocycle} when there exists a constant $C_2>0$ such
that
\begin{equation}
  \label{eq:Holder-condition-alpha}
  d_r\Big(\alpha^{(1)}(x),\alpha^{(1)}(y)\Big)\leq C_2 d(x,y)^{\nu},
  \quad\forall x,y\in M.  
\end{equation}

On the other hand, let us define $\rho=\rho(\alpha)>0$ by
\begin{equation}
  \label{eq:rho-def}
  \rho:=\max_{y\in N} \max _{x\in
    M}\left\{\norm{D\big(\alpha^{(1)}(x)\big)_y},
    \norm{D\big(\alpha^{(1)}(x)^{-1}\big)_y }\right\},
\end{equation}
and let $\lambda>0$ be the constant associated to $f$ given by
Definition~\ref{def:hyperbolic-homeo}. Then, given a real number
$t>0$, we will say that the $\nu$-Hölder cocycle $\alpha$ is
\emph{$t$-dominated} whenever
\begin{displaymath}
  \rho^t\lambda ^{\nu}<1.
\end{displaymath}

In what follows, for simplicity of the presentation we will assume
$\nu = 1$; the general case is entirely analogous.

\section{Constructing Invariant Holonomies}
\label{sec:inv-holonomies}

In this section we start the proof of Theorem~\ref{thm:main-1}. The
first step of the proof is
Proposition~\ref{pro:holonomies-existence-regularity}, where we show
the existence of invariant holonomies for $(2r-1)$-dominated
cocycles. This result is mainly inspired by Proposition 2.5 in
\cite{VianaAlmostAllCocyc} and Theorem 5.8 in
\cite{delaLlaveWindsorLivThmNonComm}.

\begin{proposition}
  \label{pro:holonomies-existence-regularity}
  Let $f\colon M\carr$ be a hyperbolic homeomorphism on a compact
  metric space $(M,d)$ and $\alpha\colon M\to\Diff r(N)$ be a
  $(2r-1)$-dominated $1$-Hölder cocycles over $f$. Then there exists a
  constant $C_4>0$ such that, for any $x\in M$ and any $y, z\in
  W^s(x,f)$ the limit
  \begin{displaymath}
    H^{s,\alpha}_{yz}:= \lim_{n\to+\infty}
    \alpha^{(n)}(z)^{-1}\circ\alpha^{(n)}(y)
  \end{displaymath}
  exists in the metric space $\big(\Diff{r-1}(N),d_{r-1}\big)$ and
  \begin{displaymath}
    d_{r-1}(H^{s,\alpha}_{yz}, Id_N) \leq C_4 d(y,z),
  \end{displaymath}
  whenever $y,z\in W^s_\varepsilon(x,f)$, where the constant
  $\varepsilon>0$ associated to $f$ is given by
  Definition~\ref{def:hyperbolic-homeo}.

  On the other hand, if $y, z\in W^u(x,f)$, we can anogously define
  \begin{displaymath}
    H^{u,\alpha}_{yz}:=\lim _{n\rightarrow \infty}
    \alpha^{(-n)}(z)^{-1} \circ \alpha^{(-n)}(y), 
  \end{displaymath}
  and the very same Hölder estimtes holds for these maps when $y,z\in
  W^u_\varepsilon(x,f)$.

  Finally, for every $x\in M$ and $\sigma\in\{s,u\}$, it holds
  \begin{displaymath}
    H^{\sigma,\alpha}_{yz}=H^{\sigma,\alpha}_{xz}\circ
    H^{\sigma,\alpha}_{yx},
  \end{displaymath}
  and
  \begin{displaymath}
    H^{\sigma,\alpha}_{f^n(y)f^n(z)}\circ \alpha^{(n)}(y)=
    \alpha^{(n)}(z)\circ H^{\sigma,\alpha}_{yz}, 
  \end{displaymath}
  for every $y,z\in W^\sigma(x,f)$ and every $n\in\Z$.
\end{proposition}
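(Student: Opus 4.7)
The plan is to show that, for $y,z\in W^s_\varepsilon(x,f)$, the sequence
\begin{displaymath}
h_n^{y,z}:=\alpha^{(n)}(z)^{-1}\circ\alpha^{(n)}(y)
\end{displaymath}
is Cauchy in $\bigl(\Diff{r-1}(N),d_{r-1}\bigr)$ via a telescoping argument. The cocycle identity \eqref{eq:cocycle-prop} gives
\begin{displaymath}
h_{n+1}^{y,z}=h_n^{y,z}\circ\alpha^{(n)}(y)^{-1}\circ\phi_n\circ\alpha^{(n)}(y),
\quad\text{where }\phi_n:=\alpha^{(1)}(f^n(z))^{-1}\circ\alpha^{(1)}(f^n(y)),
\end{displaymath}
so the task reduces to controlling the $d_{r-1}$-displacement from $Id_N$ of the conjugated correction $\alpha^{(n)}(y)^{-1}\circ\phi_n\circ\alpha^{(n)}(y)$.

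First, condition (h1) of Definition~\ref{def:hyperbolic-homeo} gives $d(f^n(y),f^n(z))\leq C_1\lambda^n d(y,z)$, and the $1$-H\"older hypothesis \eqref{eq:Holder-condition-alpha} combined with Lemma~\ref{lem:dr-1-distance-pre-compos} yields $d_r(\phi_n,Id_N)\leq C\lambda^n d(y,z)$. I would choose a nearly-optimal path $p_s:[0,1]\to\Diff{r}(N)$ from $Id_N$ to $\phi_n$ with $l_r(p_s)$ of the same order; for $n$ large enough, $p_s$ stays in a fixed small $d_r$-ball of $Id_N$, so in particular $\max_s\norm{Dp_s}_{r-2}$ is uniformly bounded. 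The core estimate is then obtained in two steps: by the natural $r-1$ analog of the second inequality of Lemma~\ref{lem:l-r-1-pre-compos-path}, post-composition of $p_s$ with $\alpha^{(n)}(y)$ inflates $l_{r-1}$ by at most $C\bigl(1+\norm{D\alpha^{(n)}(y)}_{r-2}\bigr)^{r-1}$, while by the first inequality, pre-composition with $\alpha^{(n)}(y)^{-1}$ inflates it further by $C\norm{D\alpha^{(n)}(y)^{-1}}_{r-1}$. Lemma~\ref{lem:dr-distance-cocycles} bounds these two factors by $C\rho^{(r-1)n}$ and $C\rho^{rn}$ respectively, so altogether
\begin{displaymath}
d_{r-1}\bigl(Id_N,\,\alpha^{(n)}(y)^{-1}\circ\phi_n\circ\alpha^{(n)}(y)\bigr)\leq C\,\rho^{(2r-1)n}\lambda^n\,d(y,z),
\end{displaymath}
which is precisely where the $(2r-1)$-domination enters: the geometric series of ratio $\rho^{2r-1}\lambda<1$ is summable.

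Combining this with Lemma~\ref{lem:dr-1-distance-pre-compos} (which shows that pre-composition by any element in a fixed $d_{r-1}$-ball of $Id_N$ is bi-Lipschitz) bounds $d_{r-1}(h_n^{y,z},h_{n+1}^{y,z})$ by the same geometric decay, modulo an inductively bootstrapped uniform $d_{r-1}$-bound on the $h_n^{y,z}$. Cauchyness follows, hence convergence in $\Diff{r-1}(N)$ to some $H^{s,\alpha}_{yz}$, and summing the geometric series yields the H\"older estimate $d_{r-1}(H^{s,\alpha}_{yz},Id_N)\leq C_4\,d(y,z)$. The algebraic identities then drop out by passage to the limit: $H^{s,\alpha}_{yz}=H^{s,\alpha}_{xz}\circ H^{s,\alpha}_{yx}$ follows by inserting $\alpha^{(n)}(x)^{-1}\alpha^{(n)}(x)$ inside the defining limit, and the equivariance $H^{s,\alpha}_{f^n(y)f^n(z)}\circ\alpha^{(n)}(y)=\alpha^{(n)}(z)\circ H^{s,\alpha}_{yz}$ is a re-indexing of the same limit using \eqref{eq:cocycle-prop}. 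The unstable case is handled identically by applying the argument to $\alpha$ viewed as a cocycle over $f^{-1}$.

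The main technical hurdle is obtaining the exponent $\rho^{(2r-1)n}$ rather than the naive $\rho^{2rn}$: one must exploit the asymmetry between the two parts of Lemma~\ref{lem:l-r-1-pre-compos-path}, with post-composition contributing only $\norm{D\alpha^{(n)}(y)}_{r-2}\sim\rho^{(r-1)n}$ when measured in $l_{r-1}$, whereas pre-composition necessarily brings in $\norm{D\alpha^{(n)}(y)^{-1}}_{r-1}\sim\rho^{rn}$. This tight matching between the exponent produced by the conjugation estimate and the $(2r-1)$-domination hypothesis is the delicate point of the argument.
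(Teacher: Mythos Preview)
Your argument is essentially the paper's: a telescoping Cauchy estimate with the crucial exponent $\rho^{(2r-1)n}$ extracted from the asymmetric use of Lemma~\ref{lem:l-r-1-pre-compos-path}, exactly as you describe. The only divergence is your factorization $h_{n+1}=h_n\circ\bigl(\alpha^{(n)}(y)^{-1}\phi_n\,\alpha^{(n)}(y)\bigr)$, which forces you into the bootstrap on $d_{r-1}(h_n,Id_N)$; the paper avoids this by noting that $h_n\circ\alpha^{(n)}(y)^{-1}=\alpha^{(n)}(z)^{-1}$, so the path joining $h_n$ to $h_{n+1}$ is directly $\alpha^{(n)}(z)^{-1}\circ p_s\circ\alpha^{(n)}(y)$, and its $l_{r-1}$-length is controlled by Lemmas~\ref{lem:l-r-1-pre-compos-path} and~\ref{lem:dr-distance-cocycles} alone, with no reference to $h_n$ and hence no bootstrap.
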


\begin{definition}
  \label{def:su-holonomies}
  The maps $H^{s,\alpha}$ and $H^{u,\alpha}$ given by
  Proposition~\ref{pro:holonomies-existence-regularity} are called
  \emph{stable} and \emph{unstable holonomies}, respectively.
\end{definition}

\begin{remark}
  Assuming that $\alpha$ is just $1$-dominated in
  Proposition~\ref{pro:holonomies-existence-regularity}, it can be
  rather easily shown that the invariant holonomies exists, but in
  general they are only $C^0$.
\end{remark}

\begin{proof}[Proof of
  Proposition~\ref{pro:holonomies-existence-regularity}]
  After taking forward iterates if necessary, we can assume that
  $y,z\in W^s_\varepsilon(x)$. Let us define
  $\gamma_n(y,z):=\alpha^{(n)}(z)^{-1}\circ\alpha^{(n)}(y)$. Then let
  us estimate the $d_{r-1}$-distance between $\gamma_{n+1}(y,z)$ and
  $\gamma_n(y,z)$. First of all observe that
  \begin{displaymath}
    \gamma _{n+1}(y,z)=\alpha^{(n)}(z)^{-1}\circ
    \alpha^{(1)}\big(f^n(z)\big)^{-1}\circ 
    \alpha^{(1)}\big(f^n(y)\big) \circ \alpha^{(n)}(y).
  \end{displaymath}
  
  Then, let us consider a continuous path $p\colon
  [0,1]\to\Diff{r}(N)$ connecting the diffeomorphism
  $\alpha^{(1)}(f^n(z))^{-1} \circ\alpha^{(1)}(f^n(y))$ to
  $Id_N$. Since, by compactness of $M$,
  $d_{r-1}\big(\alpha^{(1)}(w_1)^{-1}\circ \alpha^{(1)}(w_2),
  Id_N\big)$ is uniformly bounded for any $w_1,w_2\in M$, we may
  assume that the $d_{r-1}$-lengths $l_{r-1}(p_s)$ and
  $l_{r-1}(p_s^{-1})$ are also uniformly bounded, independently of $x,
  y, z$ and $n$.

  In what follows we will use the letter $C$ to denote a positive
  constant that may differ in each step. By
  Lemma~\ref{lem:l-r-1-pre-compos-path}, we know that
  \begin{displaymath}
    l_{r-1}\big(\alpha^{(n)}(z)^{-1}\circ p_s\big) \leq
    C\norm{D\alpha^{(n)}(z)^{-1}}_{r-1} \Big(1+\max_{s\in [0,1]}
    \norm{Dp_s}_{r-2}\Big)^{r-1}l_{r-1}(p_s), 
  \end{displaymath}
  for every $s\in [0,1]$, and by Lemma~\ref{lem:norm-Cr-vs-length} and
  Lemma~\ref{lem:dr-distance-cocycles} we know that the right hand is
  less or equal than $C\rho ^{rn}l_{r-1}(p_s)$, where $C$ is
  independent of $n$. Now, by Lemma~\ref{lem:l-r-1-pre-compos-path} it
  easily follows that
  \begin{displaymath}
    \begin{split}
      &l_{r-1}\big(\alpha^{(n)}(z)^{-1}\circ p_s
      \circ\alpha^{(n)}(y)\big)\\
      &\leq C \max_{k_1,\ldots ,k_{r-1}}
      \norm{D^1\alpha^{(n)}(y)}^{k_1}
      \cdots\norm{D^{r-1}\alpha^{(n)}(y)}^{k_{r-1}}
      l_{r-1}\big(\alpha^{(n)}(z)^{-1} \circ p_s\big) \\
      & \leq C\rho^{rn}\max_{k_1,\ldots ,k_{r-1}}
      \norm{D^1\alpha^{(n)}(y)}^{k_1}
      \cdots\norm{D^{r-1}\alpha^{(n)}(y)}^{k_{r-1}}l_{r-1}(p_s),
    \end{split}
  \end{displaymath}
  for every $s\in [0,1]$.

  Then, applying Lemma~\ref{lem:dr-distance-cocycles}, we get that
  \begin{displaymath}
    l_{r-1}\big(\alpha^{(n)}(z)^{-1}\circ p_s
    \circ\alpha^{(n)}(y)\big)\leq C \rho^{rn}\rho^{(r-1)n}l_{r-1}(p_s).  
  \end{displaymath}
  
  On the other hand, by symmetry the very same estimate holds for the
  inverse and thus,
  \begin{displaymath}
    d_{r-1}\big(\gamma _{n+1}(y,z), \gamma _n(y,z)\big)\leq
    C\rho^{(2r-1)n} d_{r-1}\Big(\alpha^{(1)}\big(f^n(z)\big)^{-1}
    \circ \alpha^{(1)}\big(f^n(y)\big), Id_N\Big).
  \end{displaymath}

  Now, as $M$ is compact, there is a constant $C>0$ such that
  $d_r\big(\alpha^{(1)}(w),Id_N\big)\leq C$, for all $w\in M$ and
  consequently, by Lemma~\ref{lem:dr-1-distance-pre-compos}, there
  exists $K>1$ such that
  \begin{displaymath}
    d_{r-1}\big(\alpha^{(1)}(w_1)^{-1}\circ\alpha^{(1)}(w_2),
    Id_N\big)\leq K d_{r-1}\big(\alpha^{(1)}(w_1),
    \alpha^{(1)}(w_2)\big),
  \end{displaymath}
  for every $w_1,w_2 \in M$. Thus,
  \begin{displaymath}
    \begin{split}
      d_{r-1}\big(\gamma_{n+1}(y,z), &\gamma_n(y,z)\big) \\
      & \leq C\rho^{(2r-1)n} d_{r-1}\big(\alpha^{(1)}(f^n(z))^{-1}
      \circ \alpha^{(1)}(f^n(y)), Id_N\big) \\
      & \leq CK\rho^{(2r-1)n} d_{r-1}\big(\alpha^{(1)}(f^n(z)),
      \alpha^{(1)}(f^n(y))\big),
    \end{split}
  \end{displaymath}
  where the last line of the estimate, by the Hölder condition on
  $\alpha$, is less or equal than
  $CKC_2\rho^{(2r-1)n}d(f^n(z),f^n(y))$, where $C_2$ is the positive
  constant given by the Hölder condition
  \eqref{eq:Holder-condition-alpha}. Then, invoking our assumption
  that $y,z\in W^s_{\varepsilon}(x)$, we conclude that
  \begin{displaymath}
    \begin{split}
      d_{r-1}\big(\gamma _{n+1}(y,z), \gamma _n(y,z)\big) & \leq
      CKC_2\rho^{(2r-1)n} d(f^n(z),f^n(y))\\
      & \leq CKC_2\rho ^{(2r-1)n}\lambda^n d(z,y).
    \end{split}
  \end{displaymath}

  Since $\rho ^{2r-1}\lambda <1$, this proves that the sequence is
  Cauchy in the metric space $\big(\Diff{r}(N),d_{r-1}\big)$ and
  hence, the limit $H^{s,\alpha}_{y,z}$ exists and satisfies
  \begin{displaymath}
    d_{r-1}(H^{s,\alpha}_{y,z}, Id)\leq C_4 d(y,z), 
  \end{displaymath}
  whith $C_4:=\sum_{n=0}^{\infty}CKC_2(\rho ^{(2r-1)}\lambda )^n$
\end{proof}

\section{Constructing a Transfer Map Assuming $\Fix (f)\neq \emptyset$}
\label{sec:constr-trans-map}

In this section we continue with the proof of
Theorem~\ref{thm:main-1}, using the invariant holonomies constructed
in \S\ref{sec:inv-holonomies} to define the transfer map $P$ that
solves the cohomological equation \eqref{eq:cohomo-eq-main-thm} in the
case $f$ exhibits a fixed point.

So, we will assume there exists $x\in M$ such that $f(x)=x$. For such
a point, we write $W(x):=W^s(x)\cap W^u(x)$. We start defining
$P\colon W(x)\to\Diff{r-1}(N)$ by
\begin{displaymath}
  P(y)=H^{s,\alpha}_{xy}\circ (H^{s,\beta}_{xy})^{-1}=
  H^{s,\alpha}_{xy}\circ H^{s,\beta}_{yx},
\end{displaymath}
where $H^{\sigma,\alpha}$ and $H^{\sigma,\beta}$ are the holonomy maps
given by Proposition~\ref{pro:holonomies-existence-regularity}.

Note that $P$ satisfies the following relation:
\begin{equation}
  \label{eq:cohom-eq-on-W}
  \alpha^{(n)}(y)=P(f^n(y))\circ \beta^{(n)}(y)\circ P(y)^{-1},
  \quad\forall y\in W(x),\ \forall n\in\N.
\end{equation}

Indeed,
\begin{displaymath}
  \begin{split}
    P(f(y))&=H^{s,\alpha}_{xf(y)}\circ
    H^{s,\beta}_{f(y)x}=H^{s,\alpha}_{f(x)f(y)}\circ
    H^{s,\beta}_{f(y)f(x)} \\
    & =\alpha^{(1)}(y)\circ H^{s,\alpha}_{xy}\circ
    \alpha^{(1)}(x)^{-1}\circ \beta^{(1)}(x)\circ
    H^{s,\beta}_{yx}\circ \beta^{(1)}(y)^{-1} \\
    &=\alpha^{(1)}(y)\circ H^{s,\alpha}_{xy}\circ
    H^{s,\beta}_{yx}\circ\beta^{(1)}(y)^{-1}\\
    &= \alpha^{(1)}(y)\circ P(y)\circ \beta^{(1)}(y)^{-1}.
  \end{split}
\end{displaymath}

Combining this equation with the cocycle property
\eqref{eq:cocycle-prop}, we can easily get
\eqref{eq:cohom-eq-on-W}. Observe we have strongly used the existence
of a fixed point, the periodic orbit condition \eqref{eq:POC} and
Proposition~\ref{pro:holonomies-existence-regularity}.

Up to now we have our transfer map $P$ defined on a dense subset of
$M$ where it satisfies the desired property \eqref{eq:cohom-eq-on-W}.
Then, we will show that $P$ can be extended to the whole space $M$. In
order to do this, we will prove that $P$ is Lipschitz on $W(x)$ and
the following lemma plays a key role in proving that:
\begin{lemma}
  \label{lem:transfer-P-s-u-holon}
  If $\alpha$ and $\beta$ are $1$-dominated, then
  \begin{displaymath}
    P(y)=H^{s,\alpha}_{xy}\circ H^{s,\beta}_{yx}=
    H^{u,\alpha}_{xy}\circ H^{u,\beta}_{yx},\quad \forall y\in W(x).
  \end{displaymath}
\end{lemma}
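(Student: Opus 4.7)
The strategy is to reformulate the desired identity as the equality of a forward and a backward limit of the same expression, and to then establish this equality via the Anosov closing lemma combined with the periodic orbit condition.

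First, I would show that $R(y):=H^{u,\alpha}_{xy}\circ H^{u,\beta}_{yx}$ satisfies the same cohomological recurrence as $P$. The computation that preceded the statement of the lemma for $P$ goes through verbatim with $H^s$ replaced by $H^u$: it relies only on $f(x)=x$, on the equivariance of the holonomies from Proposition~\ref{pro:holonomies-existence-regularity}, and on the (POC) identity $\alpha^{(1)}(x)=\beta^{(1)}(x)$. Hence
\[
R(f(y))=\alpha^{(1)}(y)\circ R(y)\circ\beta^{(1)}(y)^{-1},\quad\forall y\in W(x),
\]
so that both $P$ and $R$ are solutions of the same cohomological equation on $W(x)$ taking the value $Id_N$ at $x$.

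Second, I would use the limit definitions of the four holonomies from Proposition~\ref{pro:holonomies-existence-regularity}, together with the fact that (POC) forces $\alpha^{(n)}(x)=\beta^{(n)}(x)$ for every $n\in\Z$ (because $x$ is fixed). The middle factors then telescope and one obtains
\[
P(y)=\lim_{n\to+\infty}\alpha^{(n)}(y)^{-1}\circ\beta^{(n)}(y),\qquad R(y)=\lim_{n\to+\infty}\alpha^{(-n)}(y)^{-1}\circ\beta^{(-n)}(y),
\]
so the statement of the lemma reduces to proving that the forward and the backward $n$-limits of $\alpha^{(n)}(y)^{-1}\circ\beta^{(n)}(y)$ coincide for $y\in W(x)$.

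Third, since $y\in W(x)$ and $x$ is a fixed point, both $f^n(y)$ and $f^{-n}(y)$ tend to $x$ as $n\to+\infty$, hence $d\bigl(f^{2n}(f^{-n}(y)),\,f^{-n}(y)\bigr)\to 0$. The Anosov closing lemma for hyperbolic homeomorphisms (a consequence of (h1)--(h3)) then yields, for $n$ large enough, a periodic point $p_n\in\Fix(f^{2n})$ whose orbit shadows $\{f^{-n}(y),\dots,f^{n-1}(y)\}$ with errors bounded by $C\theta^{\min(i,2n-i)}$ for some $\theta\in(0,1)$. Applying (POC) to $p_n$ gives $\alpha^{(2n)}(p_n)=\beta^{(2n)}(p_n)$. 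Combining this equality with the $1$-Hölder hypothesis on $\alpha$ and $\beta$, the telescoping estimates of Lemmas~\ref{lem:norm-Cr-vs-length}--\ref{lem:dr-distance-cocycles} in their $d_0$-version, and the $1$-domination $\rho\lambda<1$, one concludes that the $d_0$-distance between $\alpha^{(2n)}(f^{-n}(y))$ and $\beta^{(2n)}(f^{-n}(y))$ tends to zero; rewriting via the cocycle identity $\alpha^{(2n)}(f^{-n}(y))=\alpha^{(n)}(y)\circ\alpha^{(-n)}(y)^{-1}$ and the analogous one for $\beta$, this vanishing converts precisely into the equality $P(y)=R(y)$.

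The main obstacle is the telescoping estimate in the third step: each of the $2n$ factor-by-factor Hölder discrepancies is only $\leq C\theta^{\min(i,2n-i)}$, yet after they are conjugated by the growing cocycle sandwich---whose operator norm can reach $\rho^{n}$---the accumulated $d_0$-error must still vanish in the limit. The borderline hypothesis $\rho\lambda<1$ is exactly what makes this telescoping converge.
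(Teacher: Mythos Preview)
Your overall strategy---reduce to showing that the forward and backward limits of $\alpha^{(n)}(y)^{-1}\circ\beta^{(n)}(y)$ coincide, then shadow by a periodic orbit via the Anosov closing lemma and invoke (POC)---is exactly the paper's approach, and your first two steps are correct.

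The gap is in the third step. The intermediate claim that the $d_0$-distance between $\alpha^{(2n)}(f^{-n}(y))$ and $\beta^{(2n)}(f^{-n}(y))$ tends to zero cannot be obtained under mere $1$-domination, and even if it held, your ``rewriting via the cocycle identity'' would cost an extra factor of $\rho^n$ (left multiplication by $\alpha^{(n)}(y)^{-1}$ is only $\rho^n$-Lipschitz for the right-invariant $C^0$-distance $\tilde d$). Concretely, telescoping $\tilde d\bigl(\alpha^{(2n)}(f^{-n}(y)),\alpha^{(2n)}(f^{-n}(p_n))\bigr)$ along the full length-$2n$ orbit segment produces a $j$-th term of order $\rho^{\,2n-1-j}\,\theta^{\min(j,2n-j)}$; for $j$ near $0$ this is of order $\rho^{2n-1}$, which diverges. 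The hypothesis $\rho\theta<1$ only controls products $\rho^k\theta^k$ with matching exponents, and on the first half of the orbit the exponents do not match.

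The paper avoids this by never comparing the full $2n$-step cocycles. It telescopes only over the half-orbit $\{y,f(y),\dots,f^{n-1}(y)\}$ (closing-lemma indices $n,\dots,2n-1$, where $\min(j,2n-j)=2n-j$) to obtain $\tilde d\bigl(\alpha^{(n)}(y)\circ\alpha^{(n)}(p_n)^{-1},Id\bigr)\leq C\,d(f^{-n}(y),f^n(y))\leq C\lambda^{n-n_0}$, and similarly for $\beta$. A single left-multiplication then gives
\[
\tilde d\bigl(\alpha^{(n)}(y)^{-1}\circ\beta^{(n)}(y),\ \alpha^{(n)}(p_n)^{-1}\circ\beta^{(n)}(p_n)\bigr)\leq C\rho^n\lambda^{n-n_0}\to 0,
\]
and symmetrically for the backward half-orbit. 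Combined with the exact identity $\alpha^{(n)}(p_n)^{-1}\circ\beta^{(n)}(p_n)=\alpha^{(-n)}(p_n)^{-1}\circ\beta^{(-n)}(p_n)$ (which is (POC) on $p_n$, rewritten), this yields the lemma. The moral: the cocycle sandwich must be split into its two halves \emph{before} telescoping, not after.
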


The following classical result (see for instance \cite[Corollary
6.4.17]{KatokHasselblatt}) will be used in the proof:

\begin{lemma}[Anosov Closing Lemma]
  \label{lem:Anosov-closing-lemma}
  Let $f$ and $\lambda>0$ be as in
  Definition~\ref{def:hyperbolic-homeo}. Then, given any $\theta
  >\lambda$, there exist $C_5>0$ and $\varepsilon _0>0$ such that for
  any $z\in M$ and every $n\in\Z$ satisfying
  $d(f^n(z),z)<\varepsilon_0$, there exists a periodic point
  $p\in\Fix(f^n)$ such that
  \begin{displaymath}
    d(f^j(z),f^j(p))\leq C_5 \theta^{\min\{j,n-j\}} d(f^n(z),z),
    \quad\text{for } j=0,1,\ldots, n.
  \end{displaymath}
\end{lemma}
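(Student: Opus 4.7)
The plan is to construct the periodic point $p$ via a contraction mapping argument inside a local product chart around $z$, and then derive the shadowing estimate by comparing forward iterates on stable leaves and backward iterates on unstable leaves.

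First I would fix any $\theta > \lambda$ and choose $\varepsilon_0 \in (0,\tau)$ small enough (depending only on $C_1$, $\lambda$, $\theta$ and the modulus of continuity of the bracket from (h3)) so that for every base point $w \in M$ the bracket $\phi_w(a,b) := [a,b]$ defines a bi-Lipschitz homeomorphism from $W^s_\delta(w) \times W^u_\delta(w)$ onto a neighborhood of $w$, with uniform Lipschitz constants. The hypothesis $d(f^n(z),z) < \varepsilon_0$ then lets us form the auxiliary point $q := [z, f^n(z)] \in W^s_\varepsilon(z) \cap W^u_\varepsilon(f^n(z))$ with $d(z,q) + d(q, f^n(z)) \leq C\, d(z, f^n(z))$.

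Second, I would find $p$ as the unique fixed point of $f^n$ in $\phi_z\bigl(W^s_\delta(z) \times W^u_\delta(z)\bigr)$. Reading $f^n$ in these coordinates as $(a,b) \mapsto \bigl(F_s(a,b), F_u(a,b)\bigr)$, condition (h1) gives that $F_s(\cdot, b)$ is a $C_1\lambda^n$-contraction uniformly in $b$, while (h2) applied to $f^{-n}$ gives that $F_u^{-1}(a, \cdot)$ is a $C_1\lambda^n$-contraction uniformly in $a$. Thus $T(a,b) := \bigl(F_s(a,b), F_u^{-1}(a,b)\bigr)$ is a contraction on the polydisc, provided $n$ is large enough so that $C_1\lambda^n < 1$ (small $n$ is handled by shrinking $\varepsilon_0$ further to force $n$ to be large, or by an independent continuity argument). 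Its unique fixed point $p$ solves $f^n(p) = p$ and, by the standard a posteriori contraction estimate applied to the initial datum $q$, satisfies $d(z,p) \leq C' d(z, f^n(z))$.

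Third, for the orbit estimate at $0 \leq j \leq n$, I would introduce the auxiliary bracket $w_j := [f^j(z), f^j(p)] \in W^s_\varepsilon(f^j(z)) \cap W^u_\varepsilon(f^j(p))$. Applying (h1) forward from time $0$ along the common stable leaf of $z$ and $[z,p]$ gives $d(f^j(z), w_j) \leq C_1 \lambda^j d(z, [z,p])$, while applying (h2) backward from time $n$ along the common unstable leaf of $f^n(z)$ and $p = f^n(p)$ gives $d(w_j, f^j(p)) \leq C_1 \lambda^{n-j} d(f^n(z), p)$. Combining the two via the triangle inequality and absorbing $C_1$ and the product-structure constants by replacing $\lambda$ with the slightly larger $\theta$ yields the desired bound $d(f^j(z), f^j(p)) \leq C_5 \theta^{\min\{j, n-j\}} d(f^n(z), z)$. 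The main obstacle is the second step: one has to ensure the polydisc size $\delta$ can be chosen uniformly in $z$ and $n$ so that $T$ maps the polydisc into itself, which requires the initial displacement $d(z,q)$ to be small compared to $\delta$; this is exactly the content of the hypothesis $d(f^n(z), z) < \varepsilon_0$ once $\varepsilon_0$ is chosen appropriately.
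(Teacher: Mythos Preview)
The paper does not prove this lemma; it is quoted as a classical result with a reference to \cite[Corollary~6.4.17]{KatokHasselblatt}. So there is nothing in the paper to compare your argument against. Your outline is the standard contraction-mapping/shadowing proof and captures the essential mechanism.

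A few points are worth tightening if you want a self-contained argument in the setting of Definition~\ref{def:hyperbolic-homeo}. First, that definition only asks the bracket $[\cdot,\cdot]$ to be continuous; the uniform bi-Lipschitz control on $\phi_w$ you invoke is not part of (h1)--(h3) and would have to be derived or assumed separately (it does hold in the intended examples---subshifts of finite type, basic sets of Axiom~A diffeomorphisms). Second, shrinking $\varepsilon_0$ cannot ``force $n$ to be large'': a periodic point of small period has $d(f^n(z),z)=0$ regardless of $\varepsilon_0$, so the small-$n$ case genuinely needs the separate argument you allude to. Third, in your orbit estimate the identification $w_j=f^j([z,p])$ and the backward bound $d(w_j,f^j(p))\le C_1\lambda^{n-j}d(f^n(z),p)$ are not automatic: forward iterates of a point in $W^u_\varepsilon(p)$ need not remain in the \emph{local} unstable set, so one must check that sizes stay controlled along the whole orbit segment before the bracket identities can be used at each time $j$. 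None of these is a serious obstruction, but each requires a sentence or two more than you have written.
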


\begin{proof}[Proof of Lemma~\ref{lem:transfer-P-s-u-holon}]
  Let us consider the distance function $\tilde d$ on $\Diff{r}(N)$
  given by
  \begin{displaymath}
    \tilde{d}(g,h)=\sup_{y\in N}d(g(y),h(y)), \quad\forall
    g,h\in\Diff{r}(N). 
  \end{displaymath}
  Observe that the distance $\tilde d$ exhibits the following
  properties:
  \begin{itemize}
  \item it is right invariant, \ie $\tilde{d}(g\circ u,h\circ
    u)=\tilde{d}(g,h)$ for every $g,h,u\in\Diff{r}(N)$;
  \item $\tilde{d}(u\circ g,u\circ h)\leq(\Lip u)\tilde{d}(g,h)$ for
    any $g,h,u\in\Diff{r}(N)$ and where $\Lip u$ denotes Lipschitz
    constant of $u$ given by $\Lip u:=\sup_{x\neq
      y}d(u(x),u(y))/d(x,y)$. Notice that $\Lip u\leq \norm{Du}$.
  \end{itemize}

  Let $\lambda>0$ be the hyperbolic constant associated to $f$ given
  by Definition~\ref{def:hyperbolic-homeo}. Let us fix
  $\theta\in(\lambda,1)$ such that $\rho\theta<1$, where
  $\rho(\alpha)$ and $\rho(\beta)$ are given by \eqref{eq:rho-def} and
  $\rho:=\max\{\rho(\alpha),\rho(\beta)\}>0$. Let $C_5>0$ and
  $\varepsilon_0 >0$ be the constants given by
  Lemma~\ref{lem:Anosov-closing-lemma}.

  Fix an arbitrary point $y\in W(x)$. We begin by noticing that, since
  $y\in W(x)$, there exist $C_6>0$ and $n_0\in\N$ such that it holds
  \begin{displaymath}
    d\big(f^{-n}(y),f^n(y)\big)\leq C_6\lambda ^{n-n_0}, \quad\forall
    n\geq n_0. 
  \end{displaymath}
  In fact, this easily follows from the fact that, as $y\in
  W(x)=W^s(x)\cap W^u(x)$, there exists $n_0\in \mathbb{N}$ such that
  $f^{n_0}(y)\in W^s_{\varepsilon }(x)$ and $f^{-n_0}(y)\in
  W^u_{\varepsilon }(x)$ and the exponential convergence towards $x$
  in $W^s_{\varepsilon}(x)$ and $W^u_{\varepsilon}(x)$.
  
  Now, let $n_1\geq n_0$ be such that for all $n\geq n_1$ it holds
  $d(f^n(y), f^{-n}(y))<\varepsilon_0$. Thus, by
  Lemma~\ref{lem:Anosov-closing-lemma}, for every $n\geq n_1$ there
  exists a periodic point $p_n\in\Fix(f^{2n})$ such that
  \begin{displaymath}
    d\Big(f^j\big(f^{-n}(p_n)\big), f^j\big(f^{-n}(y)\big)\Big) \leq
    C_5\theta^{\min\{j, 2n-j\}} d\big(f^{-n}(y),f^n(y)\big),
  \end{displaymath}
  for each $j=0,1,\ldots ,2n$. Using the periodic orbit condition
  \eqref{eq:POC} and noticing that $f^{2n}(f^{-n}(p_n))=f^{-n}(p_n)$,
  we get
  \begin{displaymath}
    \alpha^{(2n)}\big(f^{-n}(p_n)\big)=
    \beta^{(2n)}\big(f^{-n}(p_n)\big),
  \end{displaymath}
  which can be rewritten as
  \begin{displaymath}
    \alpha^{(n)}(p_n)\circ\alpha^{(n)}\big(f^{-n}(p_n)\big)
    =\beta^{(n)}(p_n) \circ \beta^{(n)}\big(f^{-n}(p_n)\big), 
  \end{displaymath}
  or equivalently,
  \begin{displaymath}
    \alpha^{(n)}(f^{-n}(p_n))\circ
    \beta^{(n)}\big(f^{-n}(p_n)\big)^{-1} =
    \alpha^{(n)}(p_n)^{-1}\circ \beta^{(n)}(p_n).
  \end{displaymath}

  Hence, since
  \begin{displaymath}
    \begin{split}
      \alpha^{(n)}&\big(f^{-n}(p_n)\big) =
      \alpha^{(1)}\big(f^{-1}(p_n)\big)\circ\cdots\circ
      \alpha^{(1)}\big(f^{-n}(p_n)\big) \\
      & = \Big(\alpha^{(1)}\big(f^{-n}(p_n)\big)^{-1} \circ\cdots\circ
      \alpha^{(1)}\big(f^{-1}(p_n)\big)^{-1}\Big)^{-1} =
      \alpha^{(-n)}(p_n)^{-1}
    \end{split}
  \end{displaymath}
  and analogously,
  \begin{displaymath}
    \beta^{(n)}\big(f^{-n}(p_n)\big)^{-1}=\beta^{(-n)}(p_n),
  \end{displaymath}
  it follows that
  \begin{equation}\label{equa: a}
    \alpha^{(-n)}(p_n)^{-1}\circ \beta^{(-n)}(p_n) =
    \alpha^{(n)}(p_n)^{-1} \circ\beta^{(n)}(p_n). 
  \end{equation}
  Now, we claim that
  \begin{equation}
    \label{eq:P-def-on-y-s}
    \tilde{d}\Big(\alpha^{(n)}(y)^{-1} \circ\beta^{(n)}(y),
    \alpha^{(n)}(p_n)^{-1} \circ\beta^{(n)}(p_n)\Big)
    \rightarrow 0,
  \end{equation}
  and
  \begin{equation}
    \label{eq:P-def-on-y-u}
    \tilde{d}\Big(\alpha^{(-n)}(y)^{-1} \circ \beta^{(-n)}(y),
    \alpha^{(-n)}(p_n)^{-1}\circ \beta^{(-n)}(p_n)\Big)\rightarrow 0,
  \end{equation}
  when $n\to+\infty$.

  Thus, it follows from (\ref{equa: a}) and our claim that
  \begin{displaymath}
    \tilde{d}\big(\alpha ^{(n)}(y)^{-1}\circ \beta ^{(n)}(y),\alpha
    ^{(-n)}(y)^{-1}\circ \beta ^{(-n)}(y)\big)\to 0,
  \end{displaymath}
  when $n\to+\infty$. Observing that
  \begin{displaymath}
    \begin{split}
      &\tilde{d}\big(\alpha ^{(n)}(y)^{-1}\circ \beta ^{(n)}(y),
      H^{s,\alpha}_{xy}\circ H^{s,\beta}_{yx}\big) \\
      & = \tilde{d}\big(\alpha^{(n)}(y)^{-1}\circ\alpha^{(n)}(x)\circ
      \beta ^{(n)}(x)^{-1} \circ \beta^{(n)}(y),H^{s,\alpha}_{xy}\circ
      H^{s,\beta}_{yx}\big)\rightarrow 0,
    \end{split}
  \end{displaymath}
  as $n\to+\infty$, and
  \begin{displaymath}
    \begin{split}
      &\tilde{d}\big(\alpha^{(-n)}(y)^{-1}\circ \beta^{(-n)}(y),
      H^{u,\alpha}_{xy}\circ H^{u,\beta}_{yx}\big) \\
      &= \tilde{d}\big(\alpha^{(-n)}(y)^{-1}\circ
      \alpha^{(-n)}(x)\circ \beta^{(-n)}(x)^{-1}
      \beta^{(-n)}(y),H^{u,\alpha}_{xy}\circ
      H^{u,\beta}_{yx}\big)\rightarrow 0,
    \end{split}
  \end{displaymath}
  as $n\to+\infty$, we would get
  \begin{displaymath}
    P(y)=H^{s,\alpha}_{xy}\circ H^{s,\beta}_{yx}=H^{u,\alpha}_{xy}\circ
    H^{u,\beta}_{yx},
  \end{displaymath}
  as desired.

  So, in order to complete the proof it remains to prove our claim. In
  fact, we shall only show \eqref{eq:P-def-on-y-s}. The other case
  \eqref{eq:P-def-on-y-u} is completely analogous.

  As before, in what follows we will use $C$ as a generic notation for
  positive constants that may differ at each step.

  We begin by noticing that
  \begin{equation}
    \label{eq:alpha-n-y-alpha-n-pn-estimate}
    \begin{split}
      &\tilde{d}\big(\alpha^{(n)}(y)\circ\alpha^{(n)}(p_n)^{-1},
      Id\big)\\
      &\leq\sum_{j=0}^{n-1}\tilde{d}\Big(\alpha^{(n-j)}(f^j(y))\circ
      \alpha^{(n-j)}(f^j(p_n))^{-1}, \\
      &\qquad\qquad\qquad\qquad\qquad\qquad
      \alpha^{(n-j-1)}(f^{j+1}(y))\circ
      \alpha^{(n-j-1)}(f^{j+1}(p_n))^{-1}\Big) \\
      &= \sum_{j=0}^{n-1}\tilde{d}\Big(\alpha^{(n-j)}(f^{j}(y))
      \circ\alpha^{(1)}(f^j(p_n))^{-1}\circ
      \alpha^{(n-j-1)}(f^{j+1}(p_n))^{-1}, \\
      & \qquad\qquad\qquad\qquad\qquad\qquad\alpha
      ^{(n-j-1)}(f^{j+1}(y))\circ
      \alpha^{(n-j-1)}(f^{j+1}(p_n))^{-1}\Big)\\
      &= \sum_{j=0}^{n-1}\tilde{d}\Big(\alpha^{(n-j)}(f^{j}(y))
      \circ\alpha^{(1)}(f^j(p_n))^{-1},
      \alpha^{(n-j-1)}(f^{j+1}(y))\Big)\\
      &\leq\sum_{j=0}^{n-1}
      \Lip\Big(\alpha^{(n-j-1)}\big(f^{j+1}(y)\big)\Big)
      \tilde{d}\Big(\alpha^{(1)}\big(f^j(y)\big)\circ
      \alpha^{(1)}\big(f^j(p_n)\big)^{-1}, Id\Big).
    \end{split}
  \end{equation}

  On the other hand, observe that
  \begin{equation}
    \label{eq:alpha-1-alpha-1-pn-Lip-estimate}
    \begin{split}
      &\tilde{d}\Big(\alpha^{(1)}\big(f^j(y)\big)\circ
      \alpha^{(1)}\big(f^j(p_n)\big)^{-1}, Id\Big) = \tilde{d}\Big(
      \alpha^{(1)}\big(f^j(y)\big),\alpha^{(1)}\big(f^j(p_n)\big)\Big)\\
      & \leq C d\big(f^j(y),f^j(p_n)\big)= C
      d\Big(f^{n+j}\big(f^{-n}(y)\big),
      f^{n+j}\big(f^{-n}(p_n)\big)\Big) \\
      & \leq C\theta^{\min\{n+j,2n-(n+j)\}} d(f^{-n}(y),f^n(y))=C
      \theta^{n-j}d(f^{-n}(y),f^n(y)),
    \end{split}
  \end{equation}
  for $j\in \lbrace 0,1,\ldots ,n-1\rbrace$. Thus, combining
  \eqref{eq:alpha-n-y-alpha-n-pn-estimate} and
  \eqref{eq:alpha-1-alpha-1-pn-Lip-estimate}, and recalling that
  $\norm{D_N\alpha^{(k)}(z)}\leq\rho^k$ for all $z\in M$,
  $\rho\theta <1$ and $d(f^{-n}(y),f^n(y))\leq C_6 \lambda ^{n-n_0}$,
  we get
  \begin{displaymath}
    \begin{split}
      &
      \tilde{d}\big(\alpha^{(n)}(y)\circ\alpha^{(n)}(p_n)^{-1},Id\big)
      \\
      & \leq\sum_{j=0}^{n-1}\norm{D\alpha^{(n-j-1)}(f^{j+1}(y))}
      C\theta^{n-j}d\big(f^{-n}(y),f^n(y)\big) \\
      &\leq C\sum_{j=0}^{n-1} \rho^{n-j}\theta^{n-j}
      d\big(f^{-n}(y),f^n(y)\big) \leq C
      d(f^{-n}(y),f^n(y))\sum_{j=0}^{\infty}
      (\rho\cdot\theta)^{n-j} \\
      &\leq C d(f^{-n}(y),f^n(y))\leq C \lambda^{n-n_0}.
    \end{split}
  \end{displaymath}

  Analogously we can prove
  \begin{displaymath}
    \tilde{d}\big(\beta^{(n)}(p_n)\circ\beta^{(n)}(y)^{-1},Id\big) \leq
    C\lambda^{n-n_0}.  
  \end{displaymath}

  Now observe that, since $0<\rho\lambda<1$, it follows
  \begin{equation}
    \label{eq:b}
    \begin{split}
      &\tilde{d}\big(\alpha^{(n)}(y)^{-1}\circ\beta^{(n)}(y),
      \alpha^{(n)}(p_n)^{-1}\circ\beta^{(n)}(p_n)\big) \\
      &\leq\Lip(\alpha^{(n)}(p_n))\tilde{d}\big(\alpha^{(n)}(p_n)\circ
      \alpha^{(n)}(y)^{-1}\circ\beta^{(n)}(y),\beta^{(n)}(p_n)\big) \\
      & =\Lip(\alpha^{(n)}(p_n))\tilde{d}\big(
      \alpha^{(n)}(p_n)\circ\alpha^{(n)}(y)^{-1},
      \beta^{(n)}(p_n)\circ\beta^{(n)}(y)^{-1}\big) \\
      &\leq\norm{D\alpha^{(n)}(p_n)}\left(\tilde{d}\big(
        \alpha^{(n)}(p_n)\circ\alpha^{(n)}(y)^{-1},Id\big) +
        \tilde{d}\big(Id,\beta^{(n)}(p_n)\circ\beta^{(n)}(y)^{-1}\big)
      \right) \\
      & \leq
      2C\rho^n\lambda^{n-n_0}=2C\lambda^{-n_0}(\rho\lambda)^n\to 0,
    \end{split}
  \end{equation}
  as $n\to+\infty$, and claim \eqref{eq:P-def-on-y-s} is proved.
\end{proof}

Then, we can finally prove that $P$ can be continuously extended to
the whole space $M$:
\begin{lemma}
  \label{lem:P-is-Lipschitz}
  The map $P\colon W(x)\to\Diff{r-1}(N)$ is Lipschitz with respect to
  the $C^{r-4}$-distance.
\end{lemma}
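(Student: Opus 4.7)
My strategy is to exploit the local product structure together with the two representations $P(y) = H^{s,\alpha}_{xy} \circ H^{s,\beta}_{yx} = H^{u,\alpha}_{xy} \circ H^{u,\beta}_{yx}$ from Lemma~\ref{lem:transfer-P-s-u-holon}. Given $y, y' \in W(x)$ with $d(y, y') < \tau$, the bracket $w := [y, y']$ lies in $W^s_\varepsilon(y) \cap W^u_\varepsilon(y')$; since $y \in W^s(x)$ and $y' \in W^u(x)$ we also get $w \in W^s(x) \cap W^u(x) = W(x)$, with $d(y, w), d(y', w) \leq C\, d(y, y')$ by continuity of the bracket operation.

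Applying the holonomy cocycle rule $H^{\sigma,\alpha}_{xy} = H^{\sigma,\alpha}_{wy} \circ H^{\sigma,\alpha}_{xw}$ (and its $\beta$-analogue) to both representations of $P(w)$ yields
\begin{align*}
  P(y) &= H^{s,\alpha}_{wy} \circ P(w) \circ H^{s,\beta}_{yw}, \\
  P(y') &= H^{u,\alpha}_{wy'} \circ P(w) \circ H^{u,\beta}_{y'w},
\end{align*}
where each of the four outer holonomies satisfies $d_{r-1}(\cdot, Id_N) \leq C\, d(y, y')$ by Proposition~\ref{pro:holonomies-existence-regularity}. Passing through $P(w)$ via the triangle inequality
\[
d_{r-4}(P(y), P(y')) \leq d_{r-4}(P(y), P(w)) + d_{r-4}(P(w), P(y')),
\]
I reduce the problem to bounding each summand by $C\, d(y, y')$.

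For the first summand, I split further via an intermediate $H^{s,\alpha}_{wy} \circ P(w)$ and apply Lemma~\ref{lem:dr-1-distance-pre-compos} twice: first to factor out $H^{s,\beta}_{yw}$ on the right, and then to factor out $H^{s,\alpha}_{wy}$ on the left. Each application at regularity level $k$ requires the ``outer'' diffeomorphism to be bounded in $d_{k+1}$; since this outer factor ultimately involves $P(w) \in \Diff{r-1}(N)$, and one needs a preliminary application to first control the composite $H^{s,\alpha}_{wy} \circ P(w)$ in the higher norm demanded by the outer step, two derivatives are consumed beyond the unit loss already implicit in Lemma~\ref{lem:dr-1-distance-pre-compos}. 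This is what forces the working regularity level down from $r-1$ to $r-4$. The second summand is handled symmetrically via the unstable representation of $P(y')$.

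The main technical obstacle is maintaining \emph{uniform} control on the intermediate norms --- specifically, ensuring that $d_{r-3}(P(w), Id_N)$ is bounded independently of $w$ --- so that the constants extracted from Lemma~\ref{lem:dr-1-distance-pre-compos} are uniform. Near the fixed point $x$ this is immediate from the defining formula of $P$ together with Proposition~\ref{pro:holonomies-existence-regularity}; for general $w \in W(x)$, propagation via the cohomological relation $P \circ f = \alpha^{(1)} \circ P \circ (\beta^{(1)})^{-1}$, combined with Lemma~\ref{lem:dr-distance-cocycles} and the compactness of $M$, should yield the desired uniform local Lipschitz estimate, which then upgrades to a global Lipschitz bound by a standard compactness argument.
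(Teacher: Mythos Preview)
Your overall architecture matches the paper's: use the bracket $w=[y,y']$, exploit the two holonomy representations of $P$ from Lemma~\ref{lem:transfer-P-s-u-holon}, and peel off the outer holonomies via Lemma~\ref{lem:dr-1-distance-pre-compos}, accepting the regularity loss this entails. Your identification of the uniform bound on $d_{r-3}(P(w),Id)$ as the crux is also exactly right.

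The gap is in how you propose to obtain that uniform bound. Propagation via the cohomological relation $P(w)=\alpha^{(n)}(w)^{-1}\circ P(f^n(w))\circ\beta^{(n)}(w)$ does not give uniform control: although every $w\in W(x)$ eventually lands in $W^s_\varepsilon(x)$ under forward iteration, the number of iterates $n$ required is unbounded over $W(x)$, and by Lemma~\ref{lem:dr-distance-cocycles} the norms of $\alpha^{(n)}$ and $\beta^{(n)}$ grow like $\rho^{rn}$. So the constant you extract blows up, and compactness of $M$ does not rescue this.

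The paper handles this obstacle differently, via a two-stage bootstrap. First it runs the entire peeling argument at level $r-3$ (one level higher than the target), producing a local Lipschitz estimate $d_{r-3}(P(y),P(z))\leq C_3(y)\,d(y,z)$ for $z\in W^s_\varepsilon(y)\cup W^u_\varepsilon(y)$, with $C_3(y)$ depending on $d_{r-2}(P(y),Id)$. This already yields
\[
d_{r-3}(P(w),Id)\leq d_{r-3}(P(w),P(y))+d_{r-3}(P(y),Id)\leq C_3(y)\varepsilon + C_1(y),
\]
a bound that depends on $y$ but \emph{not} on $w$. Feeding this back in, one repeats the peeling argument at level $r-4$ and obtains $d_{r-4}(P(y),P(z))\leq K_y\,d(y,z)$ for all nearby $z\in W(x)$, with $K_y$ now genuinely depending only on the base point. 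The final passage to a uniform $K$ via density of $W(x)$ in compact $M$ is then the standard step. It is this bootstrap --- and the accompanying extra drop in regularity --- that your outline is missing.
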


\begin{proof} 
  Let $y$ denote an arbitrary point in $W(x)$ and let us define
  $C_1(y):=d_{r-2}(P(y),Id)+1$.

  Then, let us show there exists a constant $C_2(y)>0$ such that
  \begin{equation}
    \label{eq:2-lemma-P-is-Lipschitz}
    d_{r-2}(H^{s,\alpha}_{xz}\circ H^{s,\beta}_{yx}, Id)<C_2(y),
    \quad\forall z\in W^s_\varepsilon(y), 
  \end{equation} 
  where $C_2(y)$ depends on $\varepsilon$, the constant $C_4$ given by
  Proposition \ref{pro:holonomies-existence-regularity}, $r$, the
  manifold $N$ and $C_1(y)$. Indeed, defining
\begin{displaymath}
  C_2(y):=C'(C_4\varepsilon,r,N)C_1(y) + C_4\varepsilon,
\end{displaymath}  
  where $C'(\cdot,\cdot,\cdot)$ is the constant given by Lemma
  \ref{lem:dr-1-distance-pre-compos}, we get 
  \begin{displaymath}
    \begin{split}
      &d_{r-2}(H^{s,\alpha}_{xz}\circ H^{s,\beta}_{yx}, Id) =
      d_{r-2}(H^{s,\alpha}_{xz}\circ H^{s,\alpha}_{yx}\circ
      H^{s,\alpha}_{xy}\circ H^{s,\beta}_{yx}, Id) \\
      &=d_{r-2}(H^{s,\alpha}_{xz}\circ H^{s,\alpha}_{yx}\circ P(y),
      Id)=d_{r-2}(H^{s,\alpha}_{yz}\circ P(y), Id) \\
      &\leq d_{r-2}(H^{s,\alpha}_{yz}\circ P(y), H^{s,\alpha}_{yz}) +
      d_{r-2}(H^{s,\alpha}_{yz}, Id) \\
      &\leq C'(C_4\varepsilon,r,N)d_{r-2}(P(y),Id) + C_4\varepsilon < C_2(y).
    \end{split}
  \end{displaymath}

 Then, for each $z\in W^s_\varepsilon(y)$ it holds
  \begin{equation}
    \label{eq:1-lemma-P-is-Lipschitz}
    \begin{split}
      &d_{r-3}(P(y), P(z)) = d_{r-3}(H^{s,\alpha}_{xy}\circ
      H^{s,\beta}_{yx}, H^{s,\alpha}_{xz}\circ H^{s,\beta}_{zx}) \\
      &\leq d_{r-3}\big(H^{s,\alpha}_{xy}\circ H^{s,\beta}_{yx},
      H^{s,\alpha}_{xz}\circ H^{s,\beta}_{yx}\big) +
      d_{r-3}\big(H^{s,\alpha}_{xz}\circ H^{s,\beta}_{yx},
      H^{s,\alpha}_{xz}\circ H^{s,\beta}_{zx}\big) \\
      & = d_{r-3}\big(H^{s,\alpha}_{xy}\circ H^{s,\alpha}_{zx} \circ
      H^{s,\alpha}_{xz}\circ H^{s,\beta}_{yx}, H^{s,\alpha}_{xz}\circ
      H^{s,\beta}_{yx}\big) \\
      &\qquad\qquad\qquad\qquad\qquad +
      d_{r-3}\big(H^{s,\alpha}_{xz}\circ H^{s,\beta}_{yx},
      H^{s,\alpha}_{xz}\circ H^{s,\beta}_{yx}\circ
      H^{s,\beta}_{xy}\circ H^{s,\beta}_{zx}\big) \\
      &\leq C'(C_2(y),r,N)\Big(d_{r-3}\big(H^{s,\alpha}_{xy}\circ
      H^{s,\alpha}_{zx},Id\big) + d_{r-3}\big(Id,
      H^{s,\beta}_{xy}\circ H^{s,\beta}_{zx}\big)\Big) \\
      & \leq C_3(y)d(y,z),
    \end{split}
  \end{equation}
  where $C_3(y):=2C_4C'(C_2(y),r,N)$ and the last estimate is
  consequence of Proposition
  \ref{pro:holonomies-existence-regularity}.
  
  Analogously, invoking Lemma \ref{lem:transfer-P-s-u-holon}, we can
  show that for every $y\in W(x)$ it holds
  \begin{equation}
    \label{eq:1-lemma-P-is-Lipschitz-u}
      d_{r-3}(P(y), P(z))\leq C_3(y)d(y,z), \quad\forall z\in W^u_{\varepsilon}(y).
  \end{equation}

  Now, given $y,z\in W(x)$ with $d(y,z)<\tau $, where $\tau$ is the
  constant given by Defintion \ref{def:hyperbolic-homeo}, we can
  consider $w=[y,z]=W^s_{\varepsilon }(y)\cap W^u_{\varepsilon
  }(z)$.
  Then, as $w\in W^s_{\varepsilon }(y)$ and
  $w\in W^u_{\varepsilon }(z)$ by \eqref{eq:1-lemma-P-is-Lipschitz}
  and \eqref{eq:1-lemma-P-is-Lipschitz-u} it follows that
  \begin{displaymath}
    \begin{split}
      &d_{r-3}(P(y), P(z))\leq d_{r-3}(P(y), P(w)) +d_{r-3}(P(w),
      P(z)) \\
      &\leq C_3(y)d(y,w)+ C_3(w)d(w,z).
    \end{split}
  \end{displaymath}
  From the proof, we get that $C_3(w)$ depends on $C_4\varepsilon$,
  $r$, $N$ and $C_1(w)$. Now, note that
  \begin{displaymath}
    d_{r-3}(P(w),Id)\leq d_{r-3}(P(w),P(y))+d_{r-3}(P(y),Id)\leq
    C_3(y)\varepsilon +C_1(y). 
  \end{displaymath}
  In other words, we can estimate $d_{r-3}(P(w),Id)$ from above by some constant
  that depends on $y$ but does not depend on $w$. 
  
  Thus, repeating the previous arguments replacing $r$ by $r-1$, we
  can show that for every $y\in W(x)$ there exists $C_y>0$ such that
  \begin{displaymath}
    d_{r-4}(P(y), P(z))\leq C_yd(y,w)+ C_yd(w,z),
  \end{displaymath}
  for every $z\in W(x)$ such that $d(y,z)<\tau$ and $w=[y,z]$.

  Then, invoking the fact that there exists $D>0$ such
  that 
  \begin{displaymath}
  d(y,w)+ d(w,z)\leq Dd(y,z),
  \end{displaymath}
   for every $y,z$ and $w$ as above, it follows that there exists
  $K_y=K_y(r,N,C_y,f)>0$ such that
  \begin{displaymath}
    d_{r-4}(P(y), P(z))\leq K_yd(y,z),
  \end{displaymath}
  whenever $y,z\in W(x)$ and $d(y,z)<\tau$. Now, as $W(x)$ is dense in
  $M$ and $M$ is compact it follows that there exists a constant $K>0$
  such that, for all $y,z\in W(x)$ with $d(y,z)<\tau$ we have that
  \begin{displaymath}
    d_{r-4}(P(y), P(z))\leq Kd(y,z),
  \end{displaymath}
  i.e. $P$ is Lipschitz with respect to the $C^{r-4}$ topology.
\end{proof}

Therefore, we can extend $P\colon W(x)\to\Diff{r-4}(N)$ to the closure
of $W(x)$ that is the whole space $M$. By continuity, such extension
clearly satisfies \eqref{eq:cohomo-eq-main-thm} and we complete the
proof of Theorem~\ref{thm:main-1} in the case $f$ exhibits a fixed
point.

\section{Constructing a Transfer Map in the General Case}
\label{sec:concluding-the-proof}

In this section we conclude the proof of Theorem~\ref{thm:main-1},
eliminating the assumption of the existence of a fixed point for $f$
that we used in \S\ref{sec:constr-trans-map}.

We shall use the following notation 
\begin{displaymath}
  \begin{split}
    C\big(\alpha ^{(n)},\beta ^{(n)}\big):=\Big\{P\colon
    &M\to\Diff{r-4}(N) : P \text{ is Lipschitz and } \\
    & \alpha^{(n)}(x)=P(f^n(x))\circ\beta^{(n)}(x)\circ P(x)^{-1},\
    \forall x\in M\Big\}.
  \end{split}
\end{displaymath}

To deal with the general case, let $x_0\in M$ be a periodic point and
$n_0$ its period. Consider now the new cocycles
$\tilde{\alpha},\tilde{\beta}: \mathbb{Z}\times M\rightarrow \Diff
{r}(N)$ over $F=f^{n_0}$ given by
\begin{displaymath}
  \tilde{\alpha}^{(1)}(x):=\alpha ^{(n_0)}(x)=
  \alpha^{(1)}\big(f^{n_0-1}(x)\big)\circ \ldots
  \circ\alpha^{(1)}(x),
\end{displaymath}
and
\begin{displaymath}
  \tilde{\beta}^{(1)}(x)=\beta^{(n_0)}(x)=
  \beta^{(1)}\big(f^{n_0-1}(x)\big) \circ\ldots\circ\beta^{(1)}(x). 
\end{displaymath}

It is easy to see that $\tilde{\alpha}$ and $\tilde{\beta}$ are
$(2r-1)$-dominated over $F$, since $f$ is Lipschitz, they are
Lipschitz themselves and $F(x_0)=x_0$. Thus, applying results of
\S\ref{sec:constr-trans-map} to these objects we get a Lipschitz map
$P:M\rightarrow \Diff{r-4}(N)$ satisfying
\begin{displaymath}
  \tilde{\alpha}^{(n)}(y)=P(F^n(y))\circ \tilde{\beta}^{(n)}(y)\circ
  P(y)^{-1},
\end{displaymath}
for all $y\in M$ and $n\in \mathbb{N}$. 

Rewriting this in terms of the original cocycles we get 
\begin{displaymath}
  \alpha ^{(n\cdot n_0)}(y)=P(f^{n\cdot n_0}(y))\circ \beta ^{(n\cdot
    n_0)}(y)\circ P(y)^{-1}, 
\end{displaymath}
for every $y\in M$ and $n\in \mathbb{N}$. In other words,, $P\in
C(\alpha ^{(n_0)},\beta ^{(n_0)}) $ and $P(x_0)=Id$.

What we are going to do in the rest of this section consists in
showing that, in fact, this $P$ is a transfer map for $\alpha$ and
$\beta$, \ie $P\in C(\alpha^{(1)},\beta^{(1)})$.

To prove that, the main step of our argument is the following q
\begin{lemma}
  \label{lem:P-restricted-to-su-manifolds}
  Let $P\in C\big(\alpha ^{(n)},\beta ^{(n)}\big)$ and $x\in M$ be an
  arbitrary point. Then, it holds 
  \begin{displaymath}
    P(z)=H^{s,\alpha}_{xz}\circ
    P(x)\circ H^{s,\beta}_{zx},\quad\forall z\in W^s(x). 
  \end{displaymath}
  An analogous result holds for points on the unstable set $W^u(x)$.
\end{lemma}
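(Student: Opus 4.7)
The plan is to propagate the intertwining relation to all forward iterates of $f^n$ and then take the limit, using the stable contraction to kill the error term. Concretely, iterating the hypothesis one first obtains
\begin{displaymath}
  \alpha^{(kn)}(y) = P(f^{kn}(y))\circ\beta^{(kn)}(y)\circ P(y)^{-1}, \quad \forall y\in M,\ \forall k\in\N.
\end{displaymath}
Applying this identity at $y=x$ and $y=z$, solving for $P(z)$, and substituting $P(f^{kn}(x))=\alpha^{(kn)}(x)\circ P(x)\circ\beta^{(kn)}(x)^{-1}$, a short algebraic manipulation yields the decomposition
\begin{displaymath}
  P(z) = \bigl(\alpha^{(kn)}(z)^{-1}\circ\alpha^{(kn)}(x)\bigr)\circ P(x)\circ\bigl(\beta^{(kn)}(x)^{-1}\circ\beta^{(kn)}(z)\bigr)\circ R_k,
\end{displaymath}
where $R_k := \beta^{(kn)}(z)^{-1}\circ P(f^{kn}(z))^{-1}\circ P(f^{kn}(x))\circ\beta^{(kn)}(z)$.

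As $k\to+\infty$, Proposition~\ref{pro:holonomies-existence-regularity} ensures that the first and third parenthesised factors converge, in $d_{r-1}$ and hence in the uniform metric $\tilde d$, to $H^{s,\alpha}_{xz}$ and $H^{s,\beta}_{zx}$ respectively (the subsequence $m=kn$ suffices because the full limit already exists). So the lemma reduces to proving that $R_k \to \mathrm{Id}_N$ in $\tilde d$.

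The main obstacle is precisely the conjugation appearing in $R_k$: the Lipschitz norm of $\beta^{(kn)}(z)$ can grow as fast as $\rho^{kn}$ (Lemma~\ref{lem:dr-distance-cocycles}), and this growth has to be defeated by the stable contraction. To carry this step out I would invoke the right-invariance of $\tilde d$ together with the inequality $\tilde d(u\circ v,u\circ w)\leq \Lip(u)\,\tilde d(v,w)$ --- both already used in the proof of Lemma~\ref{lem:transfer-P-s-u-holon} --- to obtain
\begin{displaymath}
  \tilde d(R_k,\mathrm{Id}_N) \leq \Lip\bigl(\beta^{(kn)}(z)^{-1}\bigr)\cdot\tilde d\bigl(P(f^{kn}(z))^{-1}\circ P(f^{kn}(x)),\mathrm{Id}_N\bigr).
\end{displaymath}
A uniform upper bound for $\Lip(P(\cdot)^{-1})$ --- coming from continuity of $P$ and compactness of $M$ --- together with the Lipschitz continuity of $P$ and the estimate $d(f^{kn}(x),f^{kn}(z))\leq C\lambda^{kn}$ (valid for $k$ large, since $z\in W^s(x)$), then yields $\tilde d(R_k,\mathrm{Id}_N)\leq C'(\rho\lambda)^{kn}$. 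As $\rho\lambda\leq\rho^{2r-1}\lambda<1$ by the domination hypothesis, this forces $R_k\to\mathrm{Id}_N$, which concludes the argument. The unstable statement is proved analogously by exchanging forward and backward iterates.
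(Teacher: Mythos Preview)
Your proof is correct and follows the same strategy as the paper: iterate the conjugacy to step $kn$, recognise the holonomy approximants $\alpha^{(kn)}(z)^{-1}\circ\alpha^{(kn)}(x)$ and $\beta^{(kn)}(x)^{-1}\circ\beta^{(kn)}(z)$, and use the right-invariance and left-Lipschitz properties of $\tilde d$ together with $\rho\lambda<1$ to kill the remainder. Two small remarks are worth making.

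First, your $R_k$ is the inverse of what it should be: a direct check gives
\[
R_k=\beta^{(kn)}(z)^{-1}\circ P(f^{kn}(x))^{-1}\circ P(f^{kn}(z))\circ\beta^{(kn)}(z).
\]
This is harmless, since the estimate for $\tilde d(R_k,\mathrm{Id}_N)$ is identical.

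Second, the paper organises the endgame slightly more economically. Rather than factoring out $R_k$, it keeps $P(z)=\alpha^{(kn)}(z)^{-1}\circ P(f^{kn}(z))\circ\beta^{(kn)}(z)$ and directly estimates
\[
\tilde d\bigl(P(z),\ \alpha^{(kn)}(z)^{-1}\circ P(f^{kn}(x))\circ\beta^{(kn)}(z)\bigr)
\leq \rho^{kn}\,\tilde d\bigl(P(f^{kn}(z)),P(f^{kn}(x))\bigr),
\]
using right-invariance to drop the $\beta$-factor and the Lipschitz bound on $\alpha^{(kn)}(z)^{-1}$ on the left. This sidesteps the uniform bound on $\Lip(P(\cdot)^{-1})$ that your route requires. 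That bound is certainly available under the stated hypotheses (continuity of $P$ into $\Diff{r-4}(N)$ and compactness of $M$), but avoiding it is precisely what lets the paper record the lemma under the weaker hypothesis that $P$ be merely $\tilde d$-Lipschitz into $\Diff 1(N)$; see Remark~\ref{rem:P-restricted-to-su-manifolds}, which is invoked in \S\ref{sec:improving-regularity}.
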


\begin{proof} 
  Since $P\in C(\alpha^{(n)},\beta^{(n)})$, we have that
  \begin{displaymath}
    \alpha ^{(n)}(y)=P(f^n(y))\circ \beta ^{(n)}(y)\circ P(y)^{-1},
    \quad\forall y\in M,
  \end{displaymath}
  which can be restated as
  \begin{equation}
    \label{eq:1-lemma-P-restricted-to-su-manifolds}
    \alpha ^{(nk)}(y)=P(f^{nk}(y))\circ \beta ^{(nk)}(y)\circ
    P(y)^{-1},
  \end{equation}
  for all $k\in \mathbb{N}$ and $y\in M$.

  Let $z\in W_{\varepsilon}^s(x)$ be an arbitrary point. Observe
  initially that, by (\ref{eq:1-lemma-P-restricted-to-su-manifolds}),
  we have that
  \begin{displaymath}
    \begin{split}
      &\alpha^{(nk)}(z)^{-1}\circ P(f^{nk}(x))\circ
      \beta^{(nk)}(z)\\
      &=\alpha^{(nk)}(z)^{-1}\circ \alpha^{(nk)}(x)\circ
      P(x)\circ\beta^{(nk)}(x)^{-1} \circ \beta^{(nk)}(z)\to
      H^{s,\alpha}_{xz}\circ P(x)\circ H^{s,\beta}_{zx},
    \end{split}
  \end{displaymath}
  as $k$ goes to infinity.  Now, considering the distance $\tilde{d}$
  defined in the proof of Lemma~\ref{lem:transfer-P-s-u-holon}, we
  have again by (\ref{eq:1-lemma-P-restricted-to-su-manifolds}) that
  \begin{displaymath}
    \begin{split}
      &\tilde{d}(P(z),\alpha ^{(nk)}(z)^{-1}\circ P(f^{nk}(x))\circ
      \beta^{(nk)}(z)) \\
      &= \tilde{d}\big(\alpha^{(nk)}(z)^{-1}\circ P(f^{nk}(z))\circ
      \beta^{(n\cdot k)}(z),\alpha^{(nk)}(z)^{-1}\circ
      P(f^{nk}(x))\circ\beta^{(nk)}(z)\big)\\
      &=\tilde{d}(\alpha ^{(nk)}(z)^{-1}\circ
      P(f^{nk}(z)),\alpha^{(nk)}(z)^{-1}\circ P(f^{nk}(x))) \\
      &\leq\rho^{nk}\tilde{d}\big(P(f^{nk}(z)),P(f^{nk}(x))\big) \leq
      C\rho^{nk}d\big(f^{n\cdot k}(z),f^{n\cdot k}(x)\big) \\
      &\leq C\rho^{nk}\lambda^{nk}d(z,x)=C(\rho\lambda )^{nk}d(z,x)
      \to 0,
    \end{split}
  \end{displaymath}
  when $k$ goes to infinity, because $\rho\lambda <1$.

  Therefore, since 
  \begin{displaymath}
    \alpha ^{(n\cdot k)}(z)^{-1}\circ P(f^{n\cdot
      k}(x))\circ \beta ^{(n\cdot k)}(z)\longrightarrow
    H^{s,\alpha}_{xz}\circ P(x)\circ H^{s,\beta}_{zx},\quad\text{as } k\to\infty,
  \end{displaymath}
  it follows that
  \begin{displaymath}
    P(z)=H^{s,\alpha }_{xz}\circ P(x)\circ H^{s,\beta}_{zx},
  \end{displaymath}
  as desired. The case when $z\in W^s(x)$ follows easily from the
  previous one.
\end{proof}

\begin{remark}
  \label{rem:P-restricted-to-su-manifolds}
  The previous lemma holds with $\alpha$ and $\beta$ being just
  1-dominated. Another superfluous hypothesis is that $P(M)\subset
  \Diff {r-4}(N)$. In fact, the same result holds if $P(M)\subset
  \Diff {s}(N)$ for any $s\geq 1$ and $P:M\rightarrow
  \textrm{Homeo}(N)$ is Lipschitz with respect to the distance
  $\tilde{d}$. This will be used in the last section.
\end{remark}

Let $x\in M$ be a periodic point and assume that its period is
$n_0$. Consider $P\in C(\alpha ^{(n_0)}, \beta ^{(n_0)})$ such that
$P(x)=Id$ which we know that exists by the previous comments, and
$y\in W^s(x)\cap W^u(f^{n_0-1}(x))$ which in particular is such that
$f(y)\in W^u(x)$. By Lemma \ref{lem:P-restricted-to-su-manifolds} we
know that
$$P\mid _{W^s(x)} :W^s(x)\rightarrow \Diff {r-1}(N)$$
is given by $P(z)=H^{s,\alpha}_{xz}\circ H^{s,\beta}_{zx}$ and
$$P\mid _{W^u(x)} :W^u(x)\rightarrow \Diff {r-1}(N)$$
is given by $P(w)=H^{u,\alpha}_{xw}\circ H^{u,\beta}_{wx}$ and
consequently
\begin{displaymath}
  P(y)=H^{u,\alpha}_{xy}\circ H^{u,\beta}_{yx} \; \textrm{and} \;
  P(f(y))=H^{u,\alpha}_{xf(y)}\circ H^{u,\beta}_{f(y)x}. 
\end{displaymath}

We claim now that
$$\alpha ^{(1)} (y)=P(f(y))\circ \beta ^{(1)} (y)\circ P(y)^{-1}$$
that is,
$$P(y)=\alpha ^{(1)} (y)^{-1}\circ P(f(y))\circ \beta ^{(1)}(y).$$
The proof of this claim is similar to the proof of Lemma
\ref{lem:transfer-P-s-u-holon} so we will just indicate how to
proceed.\

Observe initially that
$$P(y)=H^{u,\alpha}_{xy}\circ H^{u,\beta}_{yx}=\lim _{n\rightarrow \infty}\alpha ^{(n\cdot n_0)}(y)^{-1}\circ \alpha ^{(n\cdot n_0)}(x)\circ \beta ^{(n\cdot n_0)}(x)^{-1}\beta ^{(n\cdot n_0)}(y)$$
which by the periodic orbit condition (\ref{eq:POC}) is equal to
$\lim _{n\rightarrow \infty}\alpha ^{(n\cdot n_0)}(y)^{-1} \circ \beta
^{(n\cdot n_0)}(y)$. Analogously
$$P(f(y))=H^{u,\alpha}_{xf(y)}\circ H^{u,\beta}_{f(y)x}=\lim _{n\rightarrow \infty}\alpha ^{(-n\cdot n_0)}(f(y))^{-1}\circ \alpha ^{(-n\cdot n_0)}(x)\circ \beta ^{(-n\cdot n_0)}(x)^{-1}\circ \beta ^{(-n\cdot n_0)}(f(y))$$
$$=\lim _{n\rightarrow \infty}\alpha ^{(-n\cdot n_0)}(f(y))^{-1}\circ \beta ^{(-n\cdot n_0)}(f(y)) $$
and so,
$$\alpha ^{(1)}(y)^{-1}\circ P(f(y))\circ \beta ^{(1)}(y)= \lim _{n\rightarrow \infty}\alpha ^{(-n\cdot n_0+1)}(y)^{-1}\circ \beta ^{(-n\cdot n_0+1)}(y).$$
Thus, what we have to prove is that
$$\lim _{n\rightarrow \infty}\alpha ^{(n\cdot n_0)}(y)^{-1} \circ \beta ^{(n\cdot n_0)}(y) =\lim _{n\rightarrow \infty}\alpha ^{(-n\cdot n_0+1)}(y)^{-1}\circ \beta ^{(-n\cdot n_0+1)}(y).$$

As already mentioned, we will proceed analogously to what we did in
the proof of Lemma \ref{lem:transfer-P-s-u-holon}. Fix $C_5$ and
$\varepsilon _0 >0$ such that the Anosov Closing Lemma
\ref{lem:Anosov-closing-lemma} holds for $\theta \in (\lambda ,1)>0$
such that $\rho . \theta <1$ where $\rho =\rho (\alpha ,\beta)>0$ is
defined in section \ref{sec:hold-cocycl-domination} and consider the
distance $\tilde{d}$ on $\Diff {r}(N)$ as defined in the proof of
Lemma \ref{lem:transfer-P-s-u-holon}. As $y\in W^s(x)$, $f(y)\in
W^u(x)$ and $f^{n_0}(x)=x$, we can find $C_7>0$ and $n_2 \in
\mathbb{N}$ such that for all $n\geq n_2$ we have
$$d(f^{-n\cdot n_0+1}(y),f^{n\cdot n_0}(y))\leq C_7\lambda ^{(n -n_2)\cdot n_0}.$$

Fix $n_3\geq n_2$ such that for all $n\geq n_3$ we have that
$d(f^{-n\cdot n_0+1}(y), f^{n\cdot n_0}(y))<\varepsilon _0$ and thus,
by the Anosov Closing Lemma \ref{lem:Anosov-closing-lemma}, for all
$n\geq n_3$ there exists a periodic point $p_n\in M$ with $f^{2n\cdot
  n_0-1}(p_n)=p_n$ and such that
$$d(f^j(f^{-n\cdot n_0+1}(p_n)), f^j(f^{-n\cdot n_0+1}(y))\leq C_5 \theta ^{\min \lbrace j, 2n\cdot n_0-1-j\rbrace }d(f^{-n\cdot n_0+1}(y),f^{n\cdot n_0}(y))$$
for all $j=0,1,\ldots ,(2n\cdot n_0-1)$. Using the periodic orbit
condition (\ref{eq:POC}) and that $f^{2n\cdot n_0-1}(f^{-n\cdot
  n_0+1}(p_n))=f^{-n\cdot n_0+1}(p_n)$ we get that
$$\alpha ^{(2n\cdot n_0-1)}(f^{-n\cdot n_0+1}(p_n))=\beta ^{(2n\cdot n_0-1)}(f^{-n\cdot n_0+1}(p_n))$$
which can be rewritten as
$$\alpha ^{(n\cdot n_0)}(p_n)\circ \alpha ^{(n\cdot n_0-1)}(f^{-n\cdot n_0+1}(p_n))=\beta ^{(n\cdot n_0)}(p_n)\circ \beta ^{(n\cdot n_0-1)}(f^{-n\cdot n_0+1}(p_n))$$
or equivalently as
$$ \alpha ^{(n\cdot n_0)}(p_n)^{-1}\circ \beta ^{(n\cdot n_0)}(p_n)= \alpha ^{(n\cdot n_0-1)}(f^{-n\cdot n_0+1}(p_n))\circ \beta ^{(n\cdot n_0-1)}(f^{-n\cdot n_0+1}(p_n))^{-1}.$$
Observing now that
$$\alpha ^{(n\cdot n_0-1)}(f^{-n\cdot n_0+1}(p_n))= \alpha ^{(1)} (f^{-1}(p_n))\circ \cdots \circ \alpha ^{(1)}(f^{-n\cdot n_0 +1}(p_n))=\alpha ^{(-n\cdot n_0+1)}(p_n)^{-1} $$
and
$$\beta ^{(n\cdot n_0-1)}(f^{-n\cdot n_0+1}(p_n))^{-1}= \beta ^{(-n\cdot n_0+1)}(p_n)$$
we get that

\begin{equation}\label{eq:d}
  \alpha ^{(n\cdot n_0)}(p_n)^{-1}\circ \beta ^{(n\cdot n_0)}(p_n)=\alpha ^{(-n\cdot n_0+1)}(p_n)^{-1}\circ \beta ^{(-n\cdot n_0+1)}(p_n).
\end{equation}
Following the same lines as in proof of the claim in Lemma
\ref{lem:transfer-P-s-u-holon} we get that both
$$\tilde{d}(\alpha ^{(n\cdot n_0)}(p_n)^{-1}\circ \beta ^{(n\cdot n_0)}(p_n),\alpha ^{(n\cdot n_0)}(y)^{-1} \circ \beta ^{(n\cdot n_0)}(y))$$
and
$$\tilde{d}(\alpha ^{(-n\cdot n_0+1)}(p_n)^{-1}\circ \beta ^{(-n\cdot n_0+1)}(p_n), \alpha ^{(-n\cdot n_0+1)}(y)^{-1}\circ \beta ^{(-n\cdot n_0+1)}(y))$$
goes to zero when $n$ goes to infinity. Thus, using this fact and
(\ref{eq:d}) we get that
$$\lim _{n\rightarrow \infty}\alpha ^{(n\cdot n_0)}(y)^{-1} \circ \beta ^{(n\cdot n_0)}(y) =\lim _{n\rightarrow \infty}\alpha ^{(-n\cdot n_0+1)}(y)^{-1}\circ \beta ^{(-n\cdot n_0+1)}(y)$$
and consequently that
$$\alpha ^{(1)} (y)=P(f(y))\circ \beta ^{(1)} (y)\circ P(y)^{-1}$$
proving our claim.\

Summarizing, we have obtained a map $P\in C(\alpha ^{(n_0)},\beta
^{(n_0)})$ with $P(x)=Id$ where $x\in \Fix (f^{n_0})$ and such that,
for every $y\in W^s(x)\cap W^u(f^{n_0-1}(x))$, it satisfies
\begin{displaymath}
  \alpha ^{(1)} (y)=P(f(y))\circ \beta ^{(1)} (y)\circ P(y)^{-1}.
\end{displaymath} 
Thus, as all the objects involved are uniformly continuous and
$W^s(x)\cap W^u(f^{n_0-1}(x))$ is dense in $M$ it follows that
\begin{displaymath}
  \alpha ^{(1)} (y)=P(f(y))\circ \beta ^{(1)} (y)\circ P(y)^{-1}
\end{displaymath} 
for all $y\in M$, that is, $P\in C(\alpha ^{(1)}, \beta ^{(1)})$ which
completes the proof of the existence part of the Theorem
\ref{thm:main-1}.\

Another way to conclude the proof of the existence part is combining
the fact that there exist $P\in C(\alpha ^{(n_0)},\beta ^{(n_0)})$
such that for some $y\in M$ it satisfies
\begin{displaymath}
  \alpha ^{(1)} (y)=P(f(y))\circ \beta ^{(1)} (y)\circ P(y)^{-1}
\end{displaymath}
with the next lemma which is simple and interesting by itself

\begin{lemma}
  Let $P\in C(\alpha ^{(n_0)}, \beta ^{(n_0)})$. If there exist some
  $y\in M$ such that $\alpha ^{(1)} (y)=P(f(y))\circ \beta
  ^{(1)}(y)\circ P(y)^{-1}$ then $P\in C(\alpha ^{(1)},\beta ^{(1)})$.
\end{lemma}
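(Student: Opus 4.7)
The plan is to consider the set
\[
A := \bigl\{z \in M : \alpha^{(1)}(z) = P(f(z)) \circ \beta^{(1)}(z) \circ P(z)^{-1}\bigr\},
\]
which is closed by continuity of $\alpha^{(1)}, \beta^{(1)}, P$ and $f$, and nonempty by hypothesis. I would show that $A$ is saturated by both the stable and unstable manifolds of $f$, and then deduce $A = M$ using the transitivity of $f$.

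The key step is the following claim: if $z \in A$ and $w \in W^{s}(z)$, then $w \in A$. To prove it, I would apply Lemma~\ref{lem:P-restricted-to-su-manifolds} (in the extended form given by Remark~\ref{rem:P-restricted-to-su-manifolds}, applicable because $P$ is Lipschitz with respect to $\tilde{d}$ and $\alpha,\beta$ are $1$-dominated) both at the point $z$ and at the point $f(z)$, obtaining
\[
P(w) = H^{s,\alpha}_{zw} \circ P(z) \circ H^{s,\beta}_{wz},\qquad P(f(w)) = H^{s,\alpha}_{f(z)f(w)} \circ P(f(z)) \circ H^{s,\beta}_{f(w)f(z)}.
\]
The equivariance relations from Proposition~\ref{pro:holonomies-existence-regularity} give
\[
H^{s,\alpha}_{f(z)f(w)} = \alpha^{(1)}(w) \circ H^{s,\alpha}_{zw} \circ \alpha^{(1)}(z)^{-1},\qquad H^{s,\beta}_{f(w)f(z)} = \beta^{(1)}(z) \circ H^{s,\beta}_{wz} \circ \beta^{(1)}(w)^{-1}.
\]
Substituting these into the expression for $P(f(w))$ and using that $z \in A$ is equivalent to the identity $\alpha^{(1)}(z)^{-1} \circ P(f(z)) \circ \beta^{(1)}(z) = P(z)$, a direct cancellation yields
\[
P(f(w)) = \alpha^{(1)}(w) \circ \bigl[H^{s,\alpha}_{zw} \circ P(z) \circ H^{s,\beta}_{wz}\bigr] \circ \beta^{(1)}(w)^{-1} = \alpha^{(1)}(w) \circ P(w) \circ \beta^{(1)}(w)^{-1},
\]
which is exactly $w \in A$. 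The analogous argument with unstable holonomies shows that $A$ is also $W^{u}$-saturated.

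To conclude, $A$ is closed and contains $W^{s}(y)$. Since $f$ is a transitive hyperbolic homeomorphism with local product structure, $W^{s}(y)$ is dense in $M$, and therefore $A = M$, i.e., $P \in C(\alpha^{(1)},\beta^{(1)})$. The main obstacle is really just the bookkeeping in the $W^{s}$-saturation step: once the holonomy formulas at $z$ and $f(z)$ are combined with the equivariance identities, everything reduces to the single cancellation $\alpha^{(1)}(z)^{-1} \circ P(f(z)) \circ \beta^{(1)}(z) = P(z)$, which is precisely the hypothesis $z \in A$; the final density step is a standard feature of transitive hyperbolic homeomorphisms.
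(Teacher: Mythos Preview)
Your proof is correct and follows essentially the same approach as the paper: both use Lemma~\ref{lem:P-restricted-to-su-manifolds} to express $P(w)$ and $P(f(w))$ via stable holonomies, then combine the equivariance identities from Proposition~\ref{pro:holonomies-existence-regularity} with the hypothesis at $z$ (respectively $y$) to obtain the cohomological equation on $W^s(y)$, and conclude by density and continuity. Your additional observation that $A$ is $W^u$-saturated is correct but unnecessary, since density of $W^s(y)$ alone already forces $A=M$.
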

\begin{proof} By Lemma \ref{lem:P-restricted-to-su-manifolds} we know
  that $$P\mid _{W^s(y)} :W^s(y)\rightarrow \Diff {r-4}(N)$$ is given
  by $P(z)=H^{s,\alpha}_{yz}\circ P(y)\circ H^{s,\beta}_{zy}$. Thus,
$$\alpha ^{(1)}(z)\circ P(z)\circ \beta ^{(1)} (z)^{-1}= \alpha ^{(1)} (z)\circ H^{s,\alpha}_{yz}\circ P(y)\circ H^{s,\beta}_{zy}\circ \beta ^{(1)} (z)^{-1}$$
$$=H^{s,\alpha}_{f(y)f(z)}\circ \alpha ^{(1)} (y)\circ P(y)\circ \beta ^{(1)} (y)^{-1}\circ H^{s,\beta }_{f(z)f(y)}=H^{s,\alpha}_{f(y)f(z)}\circ P(f(y))\circ H^{s,\beta}_{f(z)f(y)}$$
which again by Lemma \ref{lem:P-restricted-to-su-manifolds} is equal
to $P(f(z))$. So,
$$\alpha ^{(1)} (z)=P(f(z))\circ \beta ^{(1)} (z)\circ P(z)^{-1}$$
for all $z\in W^{s}(y)$. Now, as $W^s(y)$ is dense in $M$ and $P$ is
uniformly continuous it follows that
\begin{displaymath}
  \alpha ^{(1)}(z)=P(f(z))\circ \beta ^{(1)}(z)\circ P(z)^{-1}
\end{displaymath}
for all $z\in M$ as we want.
\end{proof}

\begin{remark}
  We would like to stress again that, even though we are working with
  $\nu =1$, all results are valid for any $\nu>0$ and their proofs
  are analogous. Obviously, in the case $\nu \in (0,1)$ our
  constructions above will produce a $\nu$-H\"older map $P:
  M\rightarrow \Diff {r-4}(N)$ instead of a Lipschitz one.
\end{remark}

\section{Improving regularity}
\label{sec:improving-regularity}

The existence part of Theorem \ref{thm:main-1} give us a Lipschitz map
$P: M\rightarrow \Diff {r-4}(N)$ such that
$$\alpha ^{(n)}(x)=P(f^n(x))\circ \beta ^{(n)}(x)\circ P(x)^{-1}$$
for all $x\in M$ and $n\in \mathbb{N}$. At this section we are going
to show that the image of this $P$ is contained in $\Diff {r-1}(N)$
and also that when $f,\alpha $ and $\beta$ exhibit a higher regularity
so does $P$.\

Let $\tilde{d}$ be the distance in $\textrm{Homeo} (N)$ as defined in
the proof of Lemma \ref{lem:transfer-P-s-u-holon}.

\begin{theorem}
  \label{thm:improving-regularity}
  Let $M$, $N$ and $f$ be as in Theorem~\ref{thm:main-1} and $\alpha
  ,\beta\colon\Z\times M\to\Diff r(N)$ be two $(2r-1)$-dominated
  Lipschitz cocycles over $f$. Let us assume that there exists
  $P\colon M\to\Diff 1(N)$, a Lipschitz continuous map with respect to
  the distance $\tilde{d}$, such that
  \begin{displaymath}
    \alpha^{(1)}(x)=P(f(x))\circ\beta^{(1)}(x)\circ P(x)^{-1},
    \quad\forall x\in M.
  \end{displaymath}
  Assume also that there exists $x\in M$ such that
  $P(x)\in\Diff{r-1}(N)$. Then, $P(M)\subset\Diff{r-1}(N)$. Moreover,
  if $f$ is a $C^{r-1}$ Anosov diffeomorphism and the cocycles
  $\alpha$ and $\beta$ are $C^{r-1}$ then $P$ is $C^{r-1-\varepsilon}$
  for any small $\varepsilon >0$.
\end{theorem}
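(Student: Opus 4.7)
The plan splits naturally in two parts: first show that $P(M)\subset\Diff{r-1}(N)$, then under the stronger $C^{r-1}$ hypotheses bootstrap the global regularity from Lipschitz to $C^{r-1-\varepsilon}$.

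For the first part, I would begin by propagating the regularity of $P$ along the forward orbit of the given point $x$ via the cohomological equation: since $\alpha^{(n)}(x)$ and $\beta^{(n)}(x)^{-1}$ lie in $\Diff{r}(N)$ and $P(x)\in\Diff{r-1}(N)$ by hypothesis, the identity $P(f^n(x))=\alpha^{(n)}(x)\circ P(x)\circ\beta^{(n)}(x)^{-1}$ immediately gives $P(f^n(x))\in\Diff{r-1}(N)$ for every $n\in\Z$. Next, invoking Remark~\ref{rem:P-restricted-to-su-manifolds} (which allows Lemma~\ref{lem:P-restricted-to-su-manifolds} to be applied under the sole assumption that $P$ is Lipschitz with respect to $\tilde d$), the formula
\[
  P(z)=H^{s,\alpha}_{xz}\circ P(x)\circ H^{s,\beta}_{zx},\quad\forall z\in W^s(x),
\]
and its unstable analogue hold; combined with Proposition~\ref{pro:holonomies-existence-regularity}, which places the holonomies in $\Diff{r-1}(N)$, this yields $P(W^s(x)\cup W^u(x))\subset\Diff{r-1}(N)$. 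To cover an arbitrary $y\in M$, I would use density of $W^s(x)$ together with local product structure: pick $w\in W^s(x)$ with $d(y,w)<\tau$, set $z:=[w,y]\in W^s_\varepsilon(w)\cap W^u_\varepsilon(y)$, note that $z\in W^s(x)$ (hence $P(z)\in\Diff{r-1}(N)$), and conclude via the unstable version of Lemma~\ref{lem:P-restricted-to-su-manifolds} applied between $z$ and $y\in W^u(z)$.

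For the second part, assume that $f$ is a $C^{r-1}$ Anosov diffeomorphism and that $\alpha,\beta$ are $C^{r-1}$. The strategy is a standard Journé-type bootstrap: show that the restrictions $P|_{W^s(x)}$ and $P|_{W^u(x)}$ are of class $C^{r-1}$ with uniform bounds along leaves, then conclude $C^{r-1-\varepsilon}$ global regularity. Leafwise smoothness would follow from the holonomy formula above: the stable and unstable leaves are $C^{r-1}$ immersed submanifolds of $M$ by the Anosov hypothesis, so it suffices to prove that $z\mapsto H^{s,\alpha}_{xz}$ and $z\mapsto H^{s,\beta}_{zx}$ are $C^{r-1}$ from $W^s_{\mathrm{loc}}(x)$ into $\Diff{r-1}(N)$. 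For this I would revisit the telescoping estimate of Proposition~\ref{pro:holonomies-existence-regularity}, now differentiating the partial products $\alpha^{(n)}(z)^{-1}\circ\alpha^{(n)}(y)$ up to order $r-1$ in the base variable $z$ along the stable leaf, using the $C^{r-1}$ regularity of $f,\alpha,\beta$ to bound each derivative by the appropriate power of $\rho$ and $\lambda$; the $(2r-1)$-domination then guarantees summability of the resulting Cauchy series (with the same loss budget already absorbed in the proof of Proposition~\ref{pro:holonomies-existence-regularity}).

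The main obstacle will be the Journé step in this setting, since Journé's lemma is classically stated for real- or Banach-valued functions, whereas here $P$ takes values in the Banach manifold $\Diff{r-1}(N)$, which has no invariant linear structure. Two natural routes present themselves: either evaluate $P$ pointwise at each $y\in N$, reducing to a family of scalar Journé applications parametrized by $y$ (feasible because the $d_{r-1}$-metric controls the $C^{r-1}$-distance of evaluations), or work in exponential-chart coordinates on $\Diff{r-1}(N)$ around $P(x)$, using the pre- and post-composition estimates of Lemmas~\ref{lem:l-r-1-pre-compos-path} and~\ref{lem:dr-1-distance-pre-compos} to transport the leafwise $C^{r-1}$ bounds into a Banach-space-valued statement to which the classical Journé lemma applies. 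The $\varepsilon$ loss in the final regularity is the standard loss inherent to the Journé bootstrap and cannot be avoided with this technique.
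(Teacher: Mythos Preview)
Your first part is correct and matches the paper's argument: both use Lemma~\ref{lem:P-restricted-to-su-manifolds} (via Remark~\ref{rem:P-restricted-to-su-manifolds}) together with Proposition~\ref{pro:holonomies-existence-regularity} to propagate $\Diff{r-1}$-regularity of $P$ along stable and unstable sets. Where you use local product structure and a two-step $su$-path through the bracket $[w,y]$, the paper phrases the same idea as accessibility of the hyperbolic homeomorphism; the content is identical.

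For the second part your route diverges from the paper's. You propose to prove that $z\mapsto H^{s,\alpha}_{xz}$ is $C^{r-1}$ along leaves by differentiating the telescoping Cauchy series of Proposition~\ref{pro:holonomies-existence-regularity} in the base variable $z$ and checking that the $(2r-1)$-domination still yields summability after taking up to $r-1$ base derivatives. The paper instead avoids this calculation entirely: it forms the skew product $F_\alpha(x,\xi)=(f(x),\alpha^{(1)}(x)(\xi))$, observes that it is a $C^{r-1}$ partially hyperbolic diffeomorphism, and invokes Hirsch--Pugh--Shub invariant manifold theory to conclude that the strong stable and unstable foliations are $C^{r-1}$; since these leaves are precisely the graphs $\{(y,H^{s,\alpha}_{xy}(\xi)):y\in W^s(x)\}$, the leafwise $C^{r-1}$ regularity of the holonomies follows for free. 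This packages the hard analysis into an existing black box and is considerably shorter; your direct approach is in principle workable but requires new mixed (base-and-fiber) derivative estimates that are not already in the paper. On the other hand, you are more explicit than the paper about the difficulty of applying Journ\'e's lemma to a map with values in the Banach manifold $\Diff{r-1}(N)$: the paper simply writes ``applying Journ\'e's Theorem'' without addressing this point, whereas your two suggested reductions (pointwise evaluation in $N$, or exponential charts on $\Diff{r-1}(N)$) are the natural ways to make that step rigorous.
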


Recall that the cocycle $\alpha :\mathbb{Z}\times M \to \Diff {r}(N)$
to be $C^{r-1}$ means that the induced map $\alpha^{(1)}\colon M\times
N \to N$ is $C^{r-1}$.

\begin{proof} Note that by Lemma
  \ref{lem:P-restricted-to-su-manifolds}, Remark
  \ref{rem:P-restricted-to-su-manifolds} and Proposition
  \ref{pro:holonomies-existence-regularity} we have automatically that
  $P(W^s(x)\cup W^u(x))\subset \Diff {r-1}(N)$ since $P(x)\in \Diff
  {r-1}(N)$ and $H^{s,\alpha}_{xy}$, $H^{u,\alpha}_{xz}$,
  $H^{s,\beta}_{xy}$ and $H^{u,\beta}_{xz} \in \Diff {r-1}(N)$ for any
  $y\in W^s(x)$ and $z\in W^u(x)$. Applying again this argument we get
  that $P(W^s(y)\cup W^u(y))\subset \Diff {r-1}(N)$ for any $y\in
  W^s(x)\cup W^u(x)$. Now, as any transitive hyperbolic homeomorphism
  is accessible, that is, any two points can be connected by a path
  which is a concatenation of subpaths, where each of which lies
  entirely on a single stable or unstable leaf, it follows by the
  previous arguments that $P(M)\subset \Diff {r-1}(N)$ proving the
  first assertion.\

  Assume now that $f$ is a $C^{r-1}$ Anosov diffeomorphism and that
  $\alpha$ and $\beta$ are $C^{r-1}$ cocycles. Let us consider the
  skew product $F_{\alpha}\colon M\times N\rightarrow M\times N$ given
  by
  \begin{displaymath}
    F_{\alpha}(x,\xi)=(f(x), \alpha ^{(1)}(x)(\xi )), \quad (x,\xi)\in
    M\times N, 
  \end{displaymath}  
  which is a $C^{r-1}$ partially hyperbolic diffeomoprhism since the
  cocycle $\alpha$ is (2r-1)-dominated. We know that the distributions
  $E^s$ and $E^u$ are integrable and the corresponding foliations
  $\hat{W}^s$ (strong stable) and $\hat{W}^u$ (strong unstable)
  respectively, are $C^{r-1}$ foliations, that is, the leaves
  $\hat{W}^s(x,\xi)$ and $\hat{W}^u(x,\xi)$ are $C^{r-1}$ and depend
  continuously on the point $(x,\xi)\in M\times N$ in the $C^{r-1}$
  topology. Moreover, we can observe that they are graphs over
  $W^s(x)$ and $W^u(x)$ respectively. More precisely,
  \begin{center}
    $\hat{W}^s(x,\xi)=\lbrace (y,H^{s,\alpha}_{xy}(\xi)); y\in
    W^s(x)\rbrace$ and $\hat{W}^u(x,\xi)=\lbrace
    (y,H^{s,\alpha}_{xy}(\xi)); y\in W^u(x)\rbrace$.
  \end{center}
  This follows from Theorem 5.5 of \cite{HirshPughShubInvMan}. Now,
  since $\hat{W}^s(x,\xi)$ and $\hat{W}^u(x,\xi)$ are $C^{r-1}$
  submanifolds, it follows that $H^{s,\alpha}_{x}:W^s(x)\times \lbrace
  \xi \rbrace \rightarrow N$ and $H^{u,\alpha}_{x}:W^u(x)\times
  \lbrace \xi \rbrace \rightarrow N$ are $C^{r-1}$.

  Analogously, $H^{s,\beta}_{x}:W^s(x)\times \lbrace \xi \rbrace
  \rightarrow N$ and $H^{u,\beta}_{x}:W^u(x)\times \lbrace \xi \rbrace
  \rightarrow N$ are $C^{r-1}$.

  Combining this with the fact that for every $x\in M$, $y\in W^s(x)$
  and $z\in W^u(x)$ we have $H^{s,\alpha}_{x,y},H^{u,\alpha}_{x,z},
  H^{s,\beta}_{x,y},H^{u,\beta}_{x,z}\in \Diff {r-1}(N) $ (see
  Proposition \ref{pro:holonomies-existence-regularity}) and with
  Lemma \ref{lem:P-restricted-to-su-manifolds} and Remark
  \ref{rem:P-restricted-to-su-manifolds} it follows that $P\mid
  _{W^s(x)}$ and $P\mid _{W^u(x)}$ is $C^{r-1}$ for every $x\in
  M$. Then, applying Journ\'e's Theorem (see \cite{JourneRegLem}) we
  get that $P$ is $C^{r-1-\varepsilon}$ for any small $\varepsilon >0$
  as we want.
 
\end{proof}

To complete the proof of Theorem \ref{thm:main-1} we have just to
combine what we have done so far: in sections
\ref{sec:constr-trans-map} and \ref{sec:concluding-the-proof}, given a
periodic point $x\in M$, we have constructed ``explicitly'' a
Lipschitz continuous map $P:M\rightarrow \Diff {r-4}(N)$ such that
$P(x)=Id$ and
$$\alpha ^{(n)}(x)=P(f^n(x))\circ \beta ^{(n)}(x)\circ P(x)^{-1}$$
for all $x\in M$ and $n\in \mathbb{Z}$. Therefore, by Theorem
\ref{thm:improving-regularity}, $P(M)\subset \Diff {r-1}(N)$ and if
$f$ is a $C^{r-1}$ Anosov diffeomorphism and $\alpha$ and $\beta$ are
$C^{r-1}$ then $P$ is $C^{r-1-\varepsilon}$ for any small $\varepsilon
>0$ which completes the proof of Theorem \ref{thm:main-1}.

\medskip{\bf Acknowledgements.} The first author would like to thank
to Professor Marcelo Viana for useful conversations during the
preparation of this work. L. Backes was supported by CNPq-Brazil. The
second author was partially supported by CNPq-Brazil.

\bibliographystyle{amsalpha} \bibliography{base-biblio}

\end{document}